\documentclass[a4paper]{amsart}
\usepackage{amsthm,amssymb,amsmath,graphicx,mathtools,enumerate}
\usepackage{caption}
\usepackage{subcaption}
\usepackage[shortlabels]{enumitem} 
\usepackage[british]{babel}
\usepackage[utf8x]{inputenc}
\usepackage{hyperref}
\usepackage{soul}



\usepackage[usenames,dvipsnames]{color}

\theoremstyle{plain}
\newtheorem{theorem}{Theorem}[section]

\newtheorem{lemma}[theorem]{Lemma}

\newtheorem{claim}[theorem]{Claim}

\newtheorem{fact}[theorem]{Fact}

\theoremstyle{definition}
\newtheorem{definition}[theorem]{Definition}

\numberwithin{equation}{section}

\newenvironment{proofclaim}[1][Proof of the claim]{\begin{proof}[#1]}{\end{proof}}



\newcommand{\mylabel}[2]{#2\def\@currentlabel{#2}\label{#1}}
\newcommand{\nc}[2]{{#2}}


\newcommand{\eps}{\varepsilon}
\newcommand{\sm}{\setminus}
\renewcommand{\subset}{\subseteq}
\newcommand{\Bin}{\text{Bin}}
\newcommand{\Gnp}{G({n,p})}

\newcommand{\cC}{\mathcal{C}}
\newcommand{\cL}{\mathcal{L}}
\newcommand{\cX}{\mathcal{X}}
\newcommand{\cV}{\mathcal{V}}

\newcommand{\tref}[1]{Theorem~\ref{#1}}

\newcommand{\G}{\Gamma}
\newcommand{\Vexc}{V^{\text{exc}}}

\newcommand{\Vdeg}{V^{\deg}}
\newcommand{\Vinh}{V^{\text{inh}}}
\newcommand{\tcX}{\tilde{\mathcal{X}}}
\newcommand{\tX}{\tilde X}

\renewcommand{\c}[1]{\overline{#1}}

\def\COMMENT#1{}
	\let\COMMENT=\footnote          

\begin{document}
\date{\today}
\title{Monochromatic cycle partitions in random graphs}
\author{Richard Lang}
\address[R.~Lang]{Institute for Computer Science \\ Heidelberg University \\   69120 Heidelberg, Germany}
\author{Allan Lo}
\address[A.~Lo]{School of Mathematics\\ University of Birmingham\\ Birmingham B15 2TT\\ United Kingdom}
\email{lang@informatik.uni-heidelberg.de, s.a.lo@bham.ac.uk}
\thanks{The research leading to these results was supported by EPSRC, grant no.  EP/P002420/1 and partially by the Deutsche Forschungsgemeinschaft (DFG, German Research Foundation) -- 428212407 (R. Lang)}

\begin{abstract}
Erd\H{o}s, Gy\'arf\'as and Pyber showed that every $r$-edge-coloured complete graph $K_n$ can be covered by $25 r^2 \log r$ vertex-disjoint monochromatic cycles (independent of $n$).
Here, we extend their result to the setting of binomial random graphs. 
That is, we show that if $p  = p(n) \nc{\geq}{=} \Omega(n^{-1/(2r)})$, then with high probability any $r$-edge-coloured $G(n,p)$ can be covered by at most $1000 r^4 \log r $ vertex-disjoint monochromatic cycles. 
This answers a question of Kor\'andi, Mousset, Nenadov, \v{S}kori\'{c} and Sudakov. 
\end{abstract}

\maketitle

\section{Introduction}
An active line of current research concerns sparse random analogues of combinatorial
theorems.
An early example of this type of result was given by Rödl and Ruciński~\cite{RR95}, who proved a random analogue of Ramsey's theorem.
Similar results have been obtained for asymmetric and hypergraph Ramsey problems (see the survey of Conlon~\cite{Con14}).
In this paper we investigate the random analogue for monochromatic cycle partitioning.

Given an edge-coloured graph, how many vertex-disjoint monochromatic cycles are necessary to cover its vertices?
Such a collection of cycles is called a \emph{monochromatic cycle partition}.\footnote{For this paper, we allow a single vertex and an edge to be `degenerate' cycles.} The problem of finding a partition with few cycles was first investigated for edge-coloured complete graphs.
Erd\H{o}s, Gy\'arf\'as and Pyber~\cite{EGP91} proved that there is a function $f(r)$ such that every $r$-edge-coloured complete graph~$K_n$ admits a partition into at most $f(r)$ monochromatic cycles (independent of $n$). 
In particular, they showed that $f(r) \le 25 r^2 \log r $ and further conjectured that $f(r) = r$. 
The case $r=2$ \nc{was also}{had already been} conjectured by Lehel in~1979.
\L{}uczak, R\"odl and Szemer\'{e}di~\cite{LRS98} showed that $f(2) =2$ for large~$n$.
The bound on~$n$ was then reduced by Allen~\cite{ABH+16} (though still large).
Finally, Bessy and Thomass\'e~\cite{BT10} showed that $f(2) = 2$ for all~$n$ by an elegant argument. 
For $r = 3$, Gy\'arf\'as, Ruszink\'o, S\'ark\"ozy and Szemer\'edi~\cite{GRSS06} showed that $f(3) \le 17$.
For general~$r$, the best known upper bound for $f(r)$ is due to Gy\'arf\'as, Ruszink\'o, S\'ark\"ozy and Szemer\'edi~\cite{GRSS11}, who showed that $f(r) \le 100 r \log r$ (for large~$n$). 
On the other hand, Pokrovskiy~\cite{Pok14} disproved Erd\H{o}s, Gy\'arf\'as and Pyber's conjecture by showing that $f(r) > r$ for all $r \ge 3$.
The question of whether $f(r)$ is linear in~$r$ is still open.
There has also been a considerable interest in monochromatic cycle partitions of host graphs that are not complete.
For instance, graphs with few missing edges~\cite{GJS97}; small independence number~\cite{Sar11}; large minimum degree~\cite{BBG+14,DN17,Let15} and bipartite graphs~\cite{Hax97,LS16}. 
See a survey of Gy\'arf\'as~\cite{Gya16} for further information.

In this paper, we consider monochromatic cycle partitions in binomial random graphs~$\G \sim \Gnp$.
The study of partitioning $\G \sim \Gnp$ into monochromatic subgraphs was \nc{first}{} initiated  by Bal and DeBiasio~\cite{BD17}, who showed that if $p  \geq  C (\log n / n)^{1/3}$, then with high probability (w.h.p.), any $2$-edge-coloured $\G \sim \Gnp$ admits a partition into two vertex-disjoint monochromatic trees. 
Recently, Kohayakawa, Mota and Schacht~\cite{KMS16} showed that the same holds for $p = \omega ( (\log n / n)^{1/2} )$. 
For more colours, Ebsen, Mota and Schnitzer (see~\cite[Proposition~4.1]{KMS16}) showed that w.h.p. there exists an $r$-edge-colouring of $\G \sim \Gnp$ with $p \ge (\log n / n )^{1/(r+1)}$\nc{,}{} which cannot be partitioned by $r$ vertex-disjoint monochromatic trees.
Covering $\G \sim \Gnp$ by (not necessarily vertex-disjoint) monochromatic cycles was studied by Kor\'andi, Mousset, Nenadov, \v{S}kori\'{c} and Sudakov~\cite{KMN+17}, who showed that if $p \ge  n^{-1/r + \eps}$, then w.h.p. any $r$-edge-coloured $\G \sim \Gnp$ can be covered by $O(r^8 \log r)$ monochromatic cycles. 
The same authors asked whether one can prove a random analogue of Erd\H{o}s, Gy\'arf\'as and Pyber's theorem, that is, any $r$-edge-colouring of $\G \sim \Gnp$ admits a partition into constantly many monochromatic cycles.
In this paper, we give an affirmative answer \nc{with}{for} $p = \Omega( n^{-1/(2r)})$.

\begin{theorem} \label{thm:main}
	Let $r \geq 2$ and $p  = p(n) \geq 2^9  r^{5} n^{-1/(2r)}$.
	Then w.h.p. the random graph $\G \sim \Gnp$ satisfies the following property.
	Any $r$-edge-colouring of $\G$ admits a partition into at most $1000 r^4 \log r$ monochromatic cycles.
\end{theorem}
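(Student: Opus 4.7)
The plan is to reduce the theorem to a deterministic statement on pseudo-random graphs, and then combine an Erd\H{o}s--Gy\'arf\'as--Pyber style iterative covering with an absorbing method to turn a cover into a partition. Standard Chernoff and union-bound arguments show that for $p \geq 2^9 r^5 n^{-1/(2r)}$ the random graph $\Gamma \sim \Gnp$ is w.h.p.\ \emph{typical}, meaning every sufficiently large vertex subset has the expected density, codegrees up to order $2r$, and short-path counts. From this point on the argument is deterministic on typical graphs.

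The core step would be a \emph{covering lemma}: in any $r$-edge-colouring of a typical graph restricted to a vertex set $W$ whose size is at least a constant fraction of $n$, some colour contains a cycle covering all but a $1/(2r)$ fraction of $W$. I would prove this by a Ramsey-type extraction of a colour with a large ``well-connected'' subgraph on $W$, followed by a P\'osa-rotation argument adapted to sparse random graphs to convert that subgraph into a near-spanning monochromatic cycle. Iterating the covering lemma $O(r \log r)$ times yields a cover by $O(r^2 \log r)$ cycles of all but a small exceptional set $X$.

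To convert the cover into a partition, I would reserve, before the iteration, an \emph{absorbing reservoir}: a collection of short monochromatic paths, one family per colour, of total size much smaller than $n$, such that any sufficiently small vertex set can be inserted into one of them while preserving monochromaticity. The set $X$, together with the vertices lost when closing each path into a cycle, is absorbed in this way, and careful book-keeping bounds the total number of cycles by $1000 r^4 \log r$; the extra factor $r^2$ over the cover bound comes from maintaining a separate reservoir per colour and from overlaps at the closing step.

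The main obstacle is the covering lemma in the sparse regime. The classical Erd\H{o}s--Gy\'arf\'as--Pyber proof exploits dense subgraph densities in $K_n$ that vanish when $p \to 0$, and the exponent $1/(2r)$ in the threshold is dictated precisely by the trade-off between colour density and the expansion required for the long-cycle argument. Constructing the absorbers so that they work simultaneously for every adversarial colouring is also delicate, as each must remain flexible regardless of which colour is assigned to its neighbouring edges in $\Gamma$.
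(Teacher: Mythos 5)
Your proposal shares the broad architecture of the paper (reduce to pseudo-random graph properties, cover most vertices with few monochromatic cycles, then absorb the leftover), but the two pivotal lemmas you invoke are unsubstantiated and, as stated, likely unprovable.

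First, your covering lemma asserts that in any $r$-edge-colouring of a typical graph, \emph{some single colour contains a cycle covering all but a $1/(2r)$ fraction} of a large vertex set. This is considerably stronger than what is available even in $K_n$. The Erd\H{o}s--Gy\'arf\'as--Pyber argument only produces, in each iteration, a monochromatic cycle covering a $\Theta(1/r^2)$ fraction of the remaining vertices, which is why their iteration count and resulting cycle count is $\Theta(r^2 \log r)$, not $O(r \log r)$. Your claimed rate would, if correct, give a far better bound than EGP; but an adversary can make the largest monochromatic component in a large sparse set much smaller than $(1-1/(2r))|W|$, and P\'osa rotation inside such a component cannot recover more than the component itself. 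You also never explain how P\'osa rotation survives the sparse regime at density $n^{-1/(2r)}$, where the relevant bipartite pieces have average degree as small as $n^{1-1/(2r)}/r$ but the colour subgraphs may be far from pseudo-random. The paper avoids this entirely by using \L uczak's machinery: it applies the sparse regularity lemma and the Allen--B\"ottcher--H\'an--Kohayakawa--Person blow-up lemma (Lemma~\ref{lem:blow-up}), reducing cycle-finding in $\G$ to finding a perfect matching in a reduced graph of bounded minimum degree, and obtains at most $4r^2+1$ cycles for the almost-partition in a single shot (Lemma~\ref{lem:approximate-partition}). There is no iterative covering lemma at all.

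Second, your absorbing reservoir is fixed \emph{before} the adversary colours $\G$, and you want its short paths to remain monochromatic under any colouring. You acknowledge this is ``delicate,'' but the difficulty is fatal for a pre-chosen reservoir: the adversary can simply rainbow-colour the edges of any fixed small path collection, destroying its monochromaticity. The paper's resolution is structurally different: it does not fix absorbers in advance. Instead, Lemma~\ref{lem:approximate-partition} produces, \emph{after} the colouring is revealed, a set $U$ of size $\Omega(n)$ and a leftover set $W$ such that the almost-partition is robust to deletion of any $U'\subseteq U$ together with any small arbitrary set $U^+$. Then Lemma~\ref{lem:absorption} covers $W$ by at most $900r^4\log r$ monochromatic cycles using almost exclusively vertices of $U$. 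The key quantitative driver of the $r^4\log r$ bound is S\'ark\"ozy's theorem (Theorem~\ref{thm:sarkozy}) applied to an auxiliary graph of independence number $O(r)$ — not, as you suggest, overhead from ``a separate reservoir per colour and overlaps at the closing step.'' Finally, the $n^{-1/(2r)}$ threshold in the paper comes from the common-neighbourhood requirements in Lemma~\ref{lem:absorption} (controlling $\deg^*(S,U)$ for $r$-sets $S$), not from a P\'osa-type expansion trade-off. To turn your sketch into a proof you would need to prove a much weaker covering lemma with the correct $\Theta(1/r^2)$ rate and, more importantly, find a way to build absorbers adaptively after the colouring — at which point you would essentially be forced toward the paper's two-lemma structure.
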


It would be interesting to improve our bound on $p$.
A construction of Bal and DeBiasio~\cite{BD17} shows that for $p = o((r \log n /n)^{1/r})$ w.h.p. there exists an $r$-edge-colouring of $\G \sim G(n, p)$, which
requires an unbounded number of monochromatic components (and in particular, cycles)
to cover all vertices.
In light of this, it seems natural to conjecture the threshold to be of order $(\log n/n)^{1/r}$.

We did not attempt to optimize the number of cycles needed in Theorem~\ref{thm:main}.
Thus it is likely that our bound offers some room for improvement.
Let $f_{n,p}(r)$ be the minimum number of cycles needed such that w.h.p. every $r$-edge-colouring of $\G \sim G(n,p)$ admits a partition into at most $f_{n,p}(r)$ monochromatic cycles.
Bal and DeBiasio~\cite{BDPC} showed that $f_{n,p}(2) > 2$ if $p\leq 1/2$.
Moreover, Kor{\'a}ndi, Lang, Letzter and Pokrovskiy~\cite{KLLP19} recently constructed $r$-edge-coloured graphs on~$n$ vertices with minimum degree $(1-\eps)n$ which cannot be partitioned into fewer than $\Omega(\eps^2 r^2)$ monochromatic cycles.
Thus together with our conjectured threshold, an immediate question would be if it is true that $f_{n,p}(r)= o(r^2)$ for $p=O((\log n /n)^{1/r})$?

\nc{}{We note that since the appearance of this article, the last question has been answered negatively by Buci\'c, Kor{\'a}ndi and Sudakov~\cite{BKS19}.}

\section{Notation}
Let $G=(V,E)$ be a graph and $U,W \subseteq V$ be disjoint subsets of vertices. 
We denote the complement of $U$ in $G$ by $\c{U} = V \sm U$.
We write $G[U]$ for the subgraph of $G$ induced by $U$ and $G -U$ for $G [ \c{U} ]$.
We write $e_G(U,W)$ for the number of edges in~$G$ with one vertex in~$U$ and one in~$W$.
For a vertex $v \in V$, we write $N_G(v)$ for its neighbourhood in $G$ and $\deg_G(v)= |N_G(v)|$ for its degree.
We let $N_G(v,W)$ be the set of all neighbours of $v$ in $W$ in $G$ and $\deg_G(v,W) = |N_G(v, W)|$.
We denote $N_G^*(U,W) = \bigcap_{u \in U}N_G(u,W)$ and $\deg^*_G(U,W) = |N_G^*(U,W)|$.
We omit the subscript $G$ when it is clear from the context.
For a collection $\mathcal{C}$ of graphs, we write $V(\cC) = \bigcup_{C \in \cC} V(C)$.

For $a,b,c >0$, we write $a = b \pm c$ if $b - c \leq a \leq b+c$ and $a \neq b \pm c$ otherwise.
We write $\log a$ for the natural logarithm of $a$.
For sakes of exposition, we will omit ceiling and floor signs, whenever it is not important for the argument.

\section{Proof outline}

We sketch the proof of Theorem~\ref{thm:main}. 
Consider an $r$-edge-coloured $\G \sim \Gnp$. 
It is not difficult to find a small family $\mathcal{C}_{\text{almost}}$ of vertex-disjoint monochromatic cycles covering all but at most $\eps n$ vertices.
Indeed a standard approach, which was introduced by \L uczak~\cite{Luc99} and uses the (sparse) regularity lemma, allows us to reduce the problem of finding a large cycle in $\G$ to finding a large matching in the reduced graph $R$ of a regular partition of $\G$.
Thus we can obtain $\mathcal{C}_{\text{almost}}$ by finding a large matching in the union of $4r^2$ monochromatic components of $R$.

The main difficulty in proving Theorem~\ref{thm:main} is to cover these leftover vertices with vertex-disjoint cycles that are also disjoint from~$\mathcal{C}_{\text{almost}}$. 
Since these leftover vertices can only be determined after edge-colouring~$\G$, we cannot fix their location prior to exposing the edges.
\nc{}{We note that this is one {of the main differences} between covering and partitioning with monochromatic cycles.}
In the following lemma, we show that every small vertex set can be covered by a small number \nc{}{of} vertex-disjoint monochromatic cycles.
Moreover, almost all\nc{ of its}{} vertices \nc{}{used in this process} will be \nc{in}{taken from} a predetermined vertex set~$U$. 

\begin{lemma}\label{lem:absorption}
	Let $r \geq 2$, $0 < \beta < 1$ and $p = p(n) \geq 2^3  r^{5}(\beta ^2 n)^{-1/(2r)}$.
	Then w.h.p. the random graph $\G \sim \Gnp$ satisfies the following property.
		For any $r$-edge-colouring of $\G$ and any disjoint subsets of vertices $U$ and $W$ with $|U| \geq \beta n$ and $|W| \leq (\beta/(400r))^{4} n$, there exists a collection $\cC$ of at most $900 r^4 \log r$ disjoint monochromatic cycles such that $W \subseteq V(\cC)$ and $|V (\cC) \setminus (U \cup W) | \leq 48r^9/ (\beta p^r)$.
\end{lemma}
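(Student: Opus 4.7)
My plan is a greedy peeling: at each round I absorb a large fraction of the still-uncovered vertices of $W$ into a single monochromatic cycle whose interior lies almost entirely in the reservoir $U$, iterating until $W$ is empty. First, by Chernoff and a union bound, I would establish that w.h.p.\ $\G \sim \Gnp$ enjoys two pseudo-random properties: (a) $\deg_\G(v,A) \geq p|A|/2$ for every vertex $v$ and every $A \subseteq V(\G)$ with $|A| \geq \beta n/2$; and (b) for every such $A$ and any two vertices $u,v$, the number of $u$-$v$ paths of length exactly $r$ in $\G$ with interior in $A$ is at least $|A|^{r-1} p^r/2$. The hypothesis $p \geq 2^3 r^5 (\beta^2 n)^{-1/(2r)}$ is precisely tailored to concentrate (b), since the relevant second-moment condition is $np^{2r} \gg 1$, equivalent to $p \gg n^{-1/(2r)}$.

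From (b) and an iterated pigeonhole over the $r$ colour choices along a length-$r$ path, I would derive a monochromatic connector statement: for any $r$-edge-colouring of $\G$, any large $A$, and any two vertices $u, v$, there exist a colour $c$ and a monochromatic $u$-$v$ path of length at most $r$ in colour $c$ whose interior lies in $A$, and such a path can be chosen to avoid any preassigned ``forbidden'' set of size at most $|A|/2$. Property (a) combined with pigeonhole on $u$'s edges into $A$ further lets me prescribe $c$ to be a majority colour of $u$ into $A$. The factor $1/p^r$ in the external-vertex bound $48r^9/(\beta p^r)$ in the lemma is exactly the inverse of the path count $(\beta n)^{r-1} p^r$.

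For the main loop, I assign each $w \in W$ a majority colour $c(w) \in [r]$ via (a), split $W = W_1 \cup \dots \cup W_r$, and fix a colour $c$ with $|W_c| \geq |W|/r$. Ordering $W_c$ and chaining connectors of colour $c$ through $U$ builds a single monochromatic cycle of colour $c$ containing $W_c$. Removing its vertices from $W$ and $U$ preserves the lemma's hypothesis (up to a negligible shrinkage of $|U|$), and the argument iterates. Geometric shrinkage by a factor $(1-1/r)$ per round needs $O(r \log r)$ rounds; a within-round secondary pigeonhole, handling the case where a single cycle cannot accommodate all of $W_c$, inflates the per-round cycle count to $O(r^3)$ and yields the final bound of $900 r^4 \log r$ cycles. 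The total external-vertex count stays below $48r^9/(\beta p^r)$ because, by the slack in (b), most connectors can be routed entirely inside $U$.

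The main obstacle is the adversarial colouring: a single colour class on $U$ could be highly disconnected, so monochromatic connectivity has to be extracted \emph{a posteriori} from the pseudo-random path count in (b), by selecting $c$ only after counting paths. A second, finer challenge is bounding the cumulative external-vertex usage across $\sim r^4 \log r$ cycles: a naive disjointness count would blow the budget, so a reuse argument based on the surplus in (b) is needed to guarantee that all but $O(r^9/(\beta p^r))$ connectors lie entirely in $U$.
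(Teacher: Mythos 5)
The central step of your plan---the ``monochromatic connector statement''---does not follow from the path-count property~(b), and in fact is false. Pigeonhole over the $r$ colour choices along a length-$r$ path selects a popular \emph{colour sequence} $(c_1,\dots,c_r)$, not a monochromatic path; nothing forces $c_1=\dots=c_r$. An adversary can even prevent the existence of any short monochromatic $u$--$v$ path with interior in $A$ for a fixed pair $u,v$ of the same majority colour: colour all edges from $u$ to $A$ red, colour all edges from $v$ into $N(u,A)$ blue and those into $A\setminus N(u,A)$ red (so $v$'s majority colour into $A$ is still red, but the red neighbourhoods $A_u$ and $A_v$ are disjoint), and colour all edges inside $A$ blue. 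Then no red $u$--$v$ path through $A$ exists. This is precisely why the paper works with $2r$-tuples of vertices of $W$ rather than pairs: given an $r$-set $S\subseteq W$, the common neighbourhood $N^*(S,U)$ is large, and pigeonhole over the $r$ colour slots of each $s\in S$ either produces two slots of the same colour (giving a monochromatic path of length $2$ through a single common neighbour) or forces all $r$ colours to appear; doing this for two disjoint $r$-sets and connecting a popular colour class on each side via a matching in $\G[U]$ yields a monochromatic path of length~$3$. That is the mechanism behind Lemma~\ref{lem:fine-absorption}: one builds an auxiliary graph $H$ on $W$ whose independence number is at most $2r-1$ and then invokes S\'ark\"ozy's theorem (Theorem~\ref{thm:sarkozy}) to partition $H$ into $O(r^4\log r)$ monochromatic cycles, which lift to actual cycles in $G$. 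Your scheme has no substitute for this step.

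There are two further structural issues. First, the external-vertex budget $48r^9/(\beta p^r)$ has a specific source in the paper: Lemma~\ref{lem:dense-neighbours} produces a small exceptional set $W_1\subseteq W$ whose $r$-sets may lack the required common neighbours inside $U$, and $W_1$ (and only $W_1$) is covered using vertices drawn from $\overline{W_1}$ rather than from $U$. Your proposal offers no mechanism identifying which vertices of $W$ force you outside $U$, and the vague ``reuse argument'' cannot work because the cycles in $\cC$ must be \emph{vertex-disjoint}---connectors in different cycles cannot share interior vertices, so surplus path counts do not translate into reuse. Second, the paper's overall architecture is not a single greedy peel but a three-stage cover (exceptional set $W_1$ via Lemma~\ref{lem:fine-absorption}, bulk of $W$ via the KMN$^{+}$ machinery of Lemma~\ref{lem:KMN+17} inside a random half $U_2$ of $U$, and the small leftover via Lemma~\ref{lem:fine-absorption} again inside the other half $U_3$), which is what yields the $400r^4\log r+3r^2+400r^4\log r\leq 900r^4\log r$ count. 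Your ``$O(r\log r)$ rounds $\times O(r^3)$ cycles'' accounting would have to be rebuilt from scratch even if the connector lemma were repaired.
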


Note that, under the assumption of Lemma~\ref{lem:absorption}, there might be a small set $W' \subseteq	W$, whose neighbourhood does not behave in the expected way (e.g. there are too few edges between $W'$ and~$U \cup W$). 
Hence, in order to cover~$W'$, we will need to use some vertices outside $U$. 
We defer the proof of Lemma\nc{s}{}~\ref{lem:absorption} to Section~\ref{sec:5}.
Our proof of Lemma~\ref{lem:absorption} is based on arguments of Kor\'andi, Mousset, Nenadov, \v{S}kori\'{c} and Sudakov~\cite{KMN+17}.
However,\nc{ some}{} new ideas are needed to ensure vertex-disjoint cycles. 
Our proof of Lemma~\ref{lem:absorption} requires that $p = p(n) = \Omega(n^{-1/(2r)})$, which is the main reason for our bound in Theorem~\ref{thm:main}.

Recall that we require the cycles covering $W$ to be disjoint from those 
covering the rest of the vertices, i.e. $\mathcal{C}_{\text{almost}}$.
To deal with this, we ensure that $\mathcal{C}_{\text{almost}}$ is `robust'.
Roughly speaking, even after deleting a few vertices of $\mathcal{C}_{\text{almost}}$, there is a \nc{}{small} monochromatic cycle partition $\mathcal{C}'_{\text{almost}}$ on the remaining vertices.
This strategy was introduced by Erd\H{o}s, Gy\'arf\'as and Pyber~\cite{EGP91} and has become fairly standard in the area.
However, our ``robustness'' property is more general in the sense that we further allow the deletion of a small but arbitrary set outside of~$U$.
This is crucial for our approach as we use such a small but arbitrary set to cover~$W$ as stated Lemma~\ref{lem:absorption}.

\nc{}{The next lemma is used to find the family $\mathcal{C}_{\text{almost}}$.}

\begin{lemma}\label{lem:approximate-partition}
	Let $r \geq 2$ and $\eps_1 > 0$.
	Then there exists $C = C(r,\eps_1) >0$ such that, for $p = p(n) \geq C(\log n /n)^{1/(r+1)}$, w.h.p.  the random graph $\G \sim \Gnp$ satisfies the following property.
	For any $r$-edge-colouring of~$\G$, there are disjoint vertex sets $U$ and $W$ with $|U| \geq {2^{-12}}n$ and  $|W| \leq {\eps_1} n$ such that the following holds.
	For any sets $U' \subseteq U$ and $U^+ \subseteq V(\G)$ with $|U^+| \leq 2^{18}r^9/p^r$, the graph $\G - (W \cup U^+ \cup U')$ admits a partition into at most $4r^2+1$ monochromatic cycles.
\end{lemma}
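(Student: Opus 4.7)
The plan is to deploy Łuczak's sparse regularity strategy, as anticipated in the proof outline of the paper. I would apply the sparse regularity lemma of Kohayakawa--Rödl to $\G$ with regularity parameter $\eps_2 = \eps_2(r, \eps_1) \ll \eps_1$, producing a partition $V_0, V_1, \ldots, V_k$ with $|V_0| \leq \eps_2 n$ and $|V_i| = m$ for $i \geq 1$, where $k = k(r,\eps_1)$ is bounded. For $p \geq C(\log n/n)^{1/(r+1)}$ with $C = C(r, \eps_1)$ sufficiently large, standard concentration arguments for $\Gnp$ ensure w.h.p.\ the usual regularity inheritance: after deleting up to $\eps_2 m$ vertices from each cluster, originally $(\eps_2,p)$-regular pairs remain $(2\eps_2,p)$-regular of density roughly $p$, and a sparse version of Łuczak's cycle lemma applies inside every such pair. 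Form the reduced graph $R$ on $[k]$ by keeping an edge $ij$ iff $(V_i,V_j)$ is $(\eps_2,p)$-regular of density $\geq p/2$, and colour each edge by the majority colour of $\G[V_i,V_j]$. Then $R$ is an $r$-edge-coloured graph of density at least $1-\eps_3$ for some small $\eps_3$.

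Next, I would apply a Gyárfás-type decomposition to $R$ (using the classical cover of an $r$-coloured dense graph by few monochromatic connected matchings) to obtain at most $4r^2$ pairwise vertex-disjoint monochromatic connected matchings $M_1, \ldots, M_t$ whose vertex union covers all but $\eps_4 k$ of the clusters; one extra cluster is set aside as a connector/absorber, which will account for the $+1$ in the final cycle count. Define $W$ to consist of $V_0$, the uncovered clusters of $R$, and a small slice of each used cluster, chosen so that $|W| \leq \eps_1 n$. Define $U$ to be a random $2^{-12}n$-subset of $V(\G)$ distributed roughly uniformly across the used clusters; by concentration $U$ meets every used cluster in close to a $2^{-12}$ fraction of its vertices. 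The set $U$ serves as a distributed buffer that the adversary is allowed to partially consume via $U' \subseteq U$.

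Finally, given an arbitrary pair $(U', U^+)$ with $U' \subseteq U$ and $|U^+| \leq 2^{18}r^9/p^r$, observe that $|U^+|/m = O(r^9 k/\log n) = o(1)$ under our lower bound on $p$, while the buffer deletion affects at most a $(2^{-10}+o(1))$-fraction of any used cluster. Hence after removing $W \cup U' \cup U^+$ from $\G$, every pair originally in $R$ remains $(2\eps_2,p)$-regular of roughly the original density. For each $M_j$, apply a robust sparse variant of Łuczak's cycle lemma, in the spirit of~\cite{KMN+17}, to lift $M_j$ to a single monochromatic cycle $C_j$ of the matching's colour that spans \emph{exactly} the surviving vertices of the clusters of $M_j$; a final (possibly degenerate) monochromatic cycle takes care of the connector cluster, giving the bound $4r^2+1$. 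The main obstacle is the last step: producing an exactly spanning monochromatic cycle on the surviving regular structure, rather than an almost-spanning one, robustly under adversarial deletions. This requires a Pósa-style rotation/absorption argument calibrated to the buffer $U \setminus U'$ and to the sparsity $p = \Omega((\log n/n)^{1/(r+1)})$, and I expect this to be the technical heart of the proof.
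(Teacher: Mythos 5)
Your high-level skeleton (sparse regularity, bounded reduced structure, a distributed buffer set $U$, deletion-robustness) matches the paper, but the step you yourself flag as ``the technical heart'' --- getting each piece of the reduced structure to lift to a monochromatic cycle that covers \emph{exactly} the surviving vertices --- is left open, and it is precisely where the paper's real work lies. The paper does not attempt a Pósa-rotation/absorption argument. Instead, it invokes the sparse blow-up lemma for $\Gnp$ of Allen, Böttcher, Hán, Kohayakawa and Person (Lemma~\ref{lem:blow-up} here), which is what makes a spanning (not almost-spanning) embedding possible at density $p$. To use it, the paper does substantially more bookkeeping than you describe: it removes three explicit exceptional sets $\Vexc$, $\Vdeg$, $\Vinh$ (so that the surviving partition is super-regular on a perfect matching $R'$ and has one-sided regularity inheritance along $R'$), and it builds an abstract triangle-free graph $H$ via Lemma~\ref{lem:cycle-allocation} whose cycle components are size-compatible with the surviving cluster sizes $|V_i^*|$, traverse every edge of each monochromatic component $R_k$, and carry a $(1/50,R')$-buffer. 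None of this is replaceable by ``a robust sparse variant of Łuczak's cycle lemma'': Łuczak's connected-matching method, including its use in Korándi--Mousset--Nenadov--Škorić--Sudakov, only yields almost-spanning cycles, which is exactly why that earlier paper obtains a cover rather than a partition. The jump from covering to partitioning is the content of this lemma, and your sketch does not contain the ingredient that achieves it.

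Two structural points also differ from the paper, and your versions are slightly off. First, the paper does not use a Gyárfás-type connected-matching cover of the reduced graph. It instead reduces to a subgraph $R$ of the reduced graph that is a union of at most $4r^2$ monochromatic \emph{components} (via a simple minimum-degree pruning, Lemma~\ref{lem:choose-components}), takes a perfect matching $R' \subseteq R$ for the super-regular/buffer structure, and then lets $H$ have one cycle per component $R_k$. The ``$+1$'' in $4r^2+1$ comes from an unavoidable parity artefact (at most one isolated vertex of $H$, see Lemma~\ref{lem:cycle-allocation}), not from a designated connector/absorber cluster as you propose. Second, colouring reduced-graph edges by ``majority colour'' is too coarse: the paper forms an $r$-edge-coloured reduced multigraph $T$ recording, for each colour $c$, whether $(V_i,V_j)$ is regular and of density at least $1/r$ in colour $c$, and then thins $T$ to a simple graph. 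Majority colour alone would not guarantee the $(\eps,1/r,p)$-regularity in the chosen colour that the blow-up lemma requires.
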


We will prove Lemma~\ref{lem:approximate-partition} in Section~\ref{sec:6}. (See the beginning of Section~\ref{sec:6} for a sketch proof of Lemma~\ref{lem:approximate-partition}.)
We now prove Theorem~\ref{thm:main} using these two lemmas. 
\begin{proof}[Proof of Theorem~\ref{thm:main}]
	Let $\beta=2^{-12}$ and $\eps_1 = (\beta/(400r))^4$.
	Note that since $p \ge 2^9 r^{5} n^{-1/(2r)}$, w.h.p. $\G \sim \Gnp$ satisfies the conclusions of Lemmas~\ref{lem:absorption} and~\ref{lem:approximate-partition}.
	We will deduce the theorem from these properties.
	
	Consider any $r$-edge-colouring of $\G$.
	By Lemma~\ref{lem:approximate-partition}, there exist disjoint vertex sets $U$ and~$W$ with $|U| \geq \beta n$ and  $|W| \leq \eps_1 n$.
	Moreover, for any sets $U' \subseteq U$ and $U^+ \subseteq V(\G)$ with $|U^+| \leq  2^{18}r^9 /p^r$, the graph $\G - (W \cup U^+ \cup U')$ admits a partition into at most $4r^2+1$ monochromatic cycles.
	By Lemma~\ref{lem:absorption}, there exists a collection $\cC_1$ of at most $900 r^4 \log r$ monochromatic disjoint cycles such that $W \subseteq V(\cC_1)$ and $|V (\cC_1) \setminus (U \cup W) | \leq 48r^9/ (\beta p^r) \le 2^{18}r^9 /p^r$. 
	Let $U' = V(\cC_1) \cap U$ and $U^+ = V (\cC_1) \setminus (U \cup W)$, so $|U^+|\leq  2^{18}r^9 /p^r$.
	By the choice of $W$ and $U$, the graph $\G - (W \cup U^+ \cup U')$ admits a partition~$\cC_2$ into at most $4r^2+1$ monochromatic cycles.
	Thus $\cC_1 \cup \cC_2$ is a partition of $\G$ into at most $900r^4 \log r + 4r^2 +1 \leq 1000r^4 \log r$ monochromatic cycles.
\end{proof}

\section{Probabilistic tools}
The exposition of our probabilistic tools follows the one of \nc{}{Kor{\'a}ndi,  Mousset,  Nenadov,  {\v S}kori{\' c}  and Sudakov}~\cite{KMN+17}.
We will use the following Chernoff-type bounds on the tails of the
binomial distribution.
\begin{lemma}[{\cite[Theorem 2.1]{JLR01}}]\label{lem:che}
	Let  $0<\alpha<3/2$ and $X \sim \emph{\text{Bin}}(n,p)$ be a binomial random variable. 
	Then $\mathbb{P}\left(|X - np | > \alpha n p \right) < 2e^{-\alpha^2np/3}$.
\end{lemma}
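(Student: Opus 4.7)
The plan is to prove this via the classical Chernoff exponential moment method, handling the upper and lower deviations separately and combining them by a union bound with a factor of $2$. Writing $X = \sum_{i=1}^n X_i$ as a sum of independent $\text{Bernoulli}(p)$ variables, the moment generating function factorises as $\mathbb{E}[e^{tX}] = (1-p+pe^t)^n$, and the elementary inequality $1+x \leq e^x$ gives $\mathbb{E}[e^{tX}] \leq \exp(np(e^t-1))$ for every real $t$; this is the only input about $X$ that I need.

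For the upper tail, Markov's inequality applied to $e^{tX}$ with $t > 0$ yields $\mathbb{P}(X \geq (1+\alpha)np) \leq \exp(np(e^t-1) - t(1+\alpha)np)$; optimising by setting $t = \log(1+\alpha)$ gives
$$\mathbb{P}\bigl(X \geq (1+\alpha)np\bigr) \leq \exp\bigl(-np\,\phi(\alpha)\bigr), \qquad \phi(\alpha) = (1+\alpha)\log(1+\alpha) - \alpha.$$
For $\alpha < 1$ the symmetric application to $e^{-tX}$ with $t = -\log(1-\alpha) > 0$ produces the analogous lower-tail estimate with $\psi(\alpha) = (1-\alpha)\log(1-\alpha) + \alpha$ in place of $\phi$; for $\alpha \geq 1$ the event $X \leq (1-\alpha)np$ has probability zero and so needs no bound at all.

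It then remains to verify the calculus inequalities $\phi(\alpha), \psi(\alpha) \geq \alpha^2/3$ on the relevant ranges, which together with the two tail bounds and a union bound gives the claimed $2e^{-\alpha^2 np/3}$. A Taylor expansion yields $\phi(\alpha) = \alpha^2/2 - \alpha^3/6 + O(\alpha^4)$ and $\psi(\alpha) = \alpha^2/2 + \alpha^3/6 + O(\alpha^4)$, so the inequality is straightforward for small $\alpha$; for larger $\alpha$ one checks monotonicity of $\phi(\alpha) - \alpha^2/3$ directly, noting that at the boundary one has $\phi(3/2) = (5/2)\log(5/2) - 3/2 \approx 0.79 > 3/4$. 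The main obstacle is precisely this last step: at $\alpha = 3/2$ the two sides of $\phi(\alpha) \geq \alpha^2/3$ differ by only a few percent, so a crude Taylor cutoff is insufficient and a careful monotonicity or second-derivative comparison is needed. Everything else is the standard Chernoff template.
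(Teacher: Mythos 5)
The paper does not prove this lemma; it is quoted directly from Janson, \L{}uczak and Ruci\'nski~\cite{JLR01} (their Theorem~2.1), so there is no in-paper argument to compare against. Your proposal is the standard Chernoff exponential-moment derivation that underlies the cited result, and the key calculus step is handled correctly: $(1-\alpha)\log(1-\alpha)+\alpha\ge\alpha^2/2$ follows termwise from the Taylor series, while $(1+\alpha)\log(1+\alpha)-\alpha\ge\alpha^2/3$ on $[0,3/2]$ follows from convexity of $g(\alpha)=(1+\alpha)\log(1+\alpha)-\alpha-\alpha^2/3$ on $[0,1/2]$ (where $g(0)=g'(0)=0$) together with concavity on $[1/2,3/2]$ and the endpoint check $g(3/2)=\tfrac52\log\tfrac52-\tfrac94>0$.
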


\begin{lemma}[{\cite[Lemma 3.8]{KMN+17}}] \label{lem:densityXY}
	Fix $0< \alpha, \beta < 1$
	and let $C=6/(\alpha^2\beta)$ and
	$D=9/\alpha^2$. Then, for every $p = p(n) \in (0,1)$, w.h.p. the random
	graph $\G \sim \Gnp$ satisfies the following property. For any two disjoint
	subsets $X,Y\subseteq V(\G)$, satisfying either of
	\begin{enumerate}[\upshape(1)]
		\item $|X|, |Y| \ge D\log n/ p$, or
		\item $|X| \ge C / p$ and $|Y| \ge \beta n$,
	\end{enumerate}
	we have $e(X,Y) = (1 \pm \alpha) |X| |Y| p$.
\end{lemma}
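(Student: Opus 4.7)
The plan is to apply the Chernoff bound from Lemma~\ref{lem:che} to each candidate pair $(X,Y)$ separately and then take a union bound over all choices of ordered disjoint pairs, handling the two cases individually. For a fixed pair $(X,Y)$ with $|X|=x$ and $|Y|=y$, $e(X,Y)$ is distributed as $\text{Bin}(xy, p)$, so Lemma~\ref{lem:che} yields
\[
	\mathbb{P}\bigl(e(X,Y) \neq (1\pm \alpha)xyp\bigr) \leq 2\exp(-\alpha^2 xyp/3).
\]
The task is then to verify that this exponent beats the logarithm of the number of pairs of the given sizes, with enough of a buffer to absorb a further sum over $x$ and $y$.

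In case (1), with $x, y \geq D\log n /p = 9\log n/(\alpha^2 p)$, I would bound the number of pairs by $\binom{n}{x}\binom{n}{y} \leq n^{x+y}$. Splitting $\alpha^2 xyp/3$ into two halves and using $xp \geq 9\log n/\alpha^2$ (and symmetrically for $y$) gives $\alpha^2 xyp/3 \geq \tfrac{3}{2}(x+y)\log n$, leaving a buffer of $\tfrac{1}{2}(x+y)\log n$ over the union-bound cost. The per-$(x,y)$ failure probability is then at most $2n^{-(x+y)/2}$, which after summing geometric series over $x,y \geq D\log n/p$ is comfortably $o(1)$.

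In case (2), with $x \geq C/p = 6/(\alpha^2\beta p)$ and $y \geq \beta n$, the product satisfies $xyp \geq 6n/\alpha^2$, so $\alpha^2 xyp/3 \geq 2n$. The number of ordered disjoint pairs of sizes $(x,y)$ is bounded by the multinomial $\binom{n}{x}\binom{n-x}{y} \leq 3^n$, so the per-$(x,y)$ failure probability is at most $2\exp(-2n + n\log 3) \leq 2 e^{-n/10}$, which survives a union bound over the $O(n^2)$ choices of $(x,y)$ with room to spare. There is no real obstacle here---the proof is a template application of Chernoff together with a union bound---and the only thing to verify is that the constants $C$ and $D$ are chosen large enough to give the exponent the required head start in each case, which is precisely what $C=6/(\alpha^2\beta)$ and $D=9/\alpha^2$ are calibrated for.
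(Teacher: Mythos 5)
Your proof is correct; the paper itself cites this as Lemma~3.8 of Kor\'andi, Mousset, Nenadov, \v{S}kori\'c and Sudakov~\cite{KMN+17} and does not reprove it. The Chernoff-plus-union-bound argument you give, with the constants $C$ and $D$ calibrated so that $\alpha^2 xyp/3$ beats $(x+y)\log n$ in case (1) and $n\log 3$ in case (2), is exactly the standard route and checks out.
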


\begin{lemma}[{\cite[Lemma 3.9]{KMN+17}}] \label{lem:density_triples}
	For every $p = p(n) \in (0,1)$, w.h.p. the random graph $\G \sim \Gnp$ satisfies
	the following property.
	For every family~$\cL$ of $\ell$ disjoint pairs of vertices and every set $Y$ of $3\ell$ vertices that is disjoint from each pair in~$\cL$, we have
	\begin{align*}
	\sum_{\{v,w\} \in \cL} \deg^*_G(\{v,w\}, Y) \le \begin{cases}
	72 \ell \log n &\text{if } \ell \le 6 \log n / p^2, \\
	6\ell^2 p^2 &\text{otherwise.}
	\end{cases}
	\end{align*}
\end{lemma}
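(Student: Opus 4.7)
The plan is a Chernoff-plus-union-bound argument hinging on the following observation. For any admissible pair $(\cL, Y)$ write
\begin{equation*}
X(\cL, Y) = \sum_{\{v,w\} \in \cL} \deg^*_\G(\{v,w\}, Y) = \sum_{\{v,w\} \in \cL}\sum_{y \in Y} \mathbf{1}[yv \in E(\G)]\cdot\mathbf{1}[yw \in E(\G)].
\end{equation*}
Each of the $3\ell^2$ indicator summands depends on exactly two edges of $\G$. The disjointness of the pairs in $\cL$ (so that each vertex of $V(\cL)$ belongs to a unique pair) together with the disjointness of $Y$ from $V(\cL)$ ensures that any two distinct summands depend on disjoint pairs of edges. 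Hence the indicators are mutually independent and $X(\cL, Y) \sim \Bin(3\ell^2, p^2)$ with mean $\mu = 3\ell^2 p^2$.

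I would then apply a Chernoff-type bound in each of the two regimes. When $\ell > 6\log n / p^2$, one has $\mu \ge 18\ell \log n$, and \lref{lem:che} with $\alpha = 1$ gives
\begin{equation*}
\mathbb{P}\bigl(X(\cL,Y) > 6\ell^2 p^2\bigr) = \mathbb{P}\bigl(X(\cL,Y) > 2\mu\bigr) \le 2 e^{-\mu/3} \le 2 n^{-6\ell}.
\end{equation*}
When $\ell \le 6\log n / p^2$, one has $\mu \le 18\ell \log n$, and the standard upper-tail estimate $\mathbb{P}(X \ge t) \le (e\mu/t)^{t}$ (valid for $t \ge \mu$ and any sum of independent Bernoullis, derived directly from the moment generating function) applied with $t = 72\ell \log n$ yields
\begin{equation*}
\mathbb{P}\bigl(X(\cL,Y) > 72 \ell \log n\bigr) \le (e/4)^{72 \ell \log n} \le n^{-6\ell},
\end{equation*}
since $\log(4/e) > 1/12$.

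Finally, I would union bound over all admissible configurations. For fixed $\ell$, the number of choices is at most $n^{2\ell} \cdot n^{3\ell} = n^{5\ell}$ (choose at most $n^{2\ell}$ ordered tuples for the pairs in $\cL$, and at most $n^{3\ell}$ vertex sets $Y$), so the total failure probability for that value of $\ell$ is at most $2 n^{-\ell}$, and $\sum_{\ell \ge 1} 2 n^{-\ell} = O(1/n) = o(1)$.

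I do not anticipate any real obstacle here: the only delicate point is justifying the independence of the $3\ell^2$ indicators, and this is precisely what the disjointness hypotheses in the lemma statement are designed to furnish. Without these hypotheses one would have overlapping edge sets and have to resort to Janson-type inequalities or a more careful moment argument; with them, the rest is straightforward bookkeeping with Chernoff.
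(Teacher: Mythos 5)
The paper does not reproduce a proof of this lemma; it cites it directly from Kor\'andi, Mousset, Nenadov, \v{S}kori\'c and Sudakov~\cite{KMN+17}, so there is no in-paper argument to compare against. That said, your proof is correct and complete: the key observation that the $3\ell^2$ indicator summands depend on pairwise-disjoint pairs of edges (using both the disjointness of the pairs in $\cL$ and the disjointness of $Y$ from $V(\cL)$) does yield mutual independence, so $X(\cL,Y)\sim\Bin(3\ell^2,p^2)$; the two tail estimates and the $n^{5\ell}$ union bound then go through exactly as you write. This is the natural Chernoff-plus-union-bound route and is the same style of argument the present paper uses for its own Lemma~\ref{lem:dense-neighbours}.
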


The following lemma plays a key role in our proof. 
It says that given any \nc{}{sufficiently large} vertex set~$X$, there exists a \nc{vertex small}{small vertex} set~$Y$ such that every $k$-set\nc{s}{} $S \subseteq \overline{ X \cup Y}$ has the expected number of common \nc{numbers}{neighbours} in~$X$. 

\begin{lemma}\label{lem:dense-neighbours}
	Let $k \geq 1$, $\alpha, \beta  \in (0,1)$.
	Let $K = 12k/(\alpha^2\beta)$ and $p = p(n) \geq (K\log n/n)^{1/k}$.
	Then w.h.p. the random graph $\G \sim \Gnp$ satisfies the following property.
	For any vertex set~$X$ with $|X| \geq \beta n$, there exists a set $Y \subseteq \overline{X}$ of size at most $K/p^{k}$ such that all $k$-sets $S \subseteq \overline{ X \cup Y}$ have $\deg^*(S,X) =(1 \pm \alpha) p^k|X|$.
\end{lemma}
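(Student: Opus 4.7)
Fix $X \subseteq V(\G)$ with $|X| \geq \beta n$ and call a $k$-subset $S \subseteq \overline{X}$ \emph{bad} if $\deg^*(S, X) \neq (1 \pm \alpha) p^k |X|$. The plan is to take $Y$ to be the vertex set of any maximal family $\mathcal{F}$ of pairwise disjoint bad $k$-sets in $\overline{X}$. Maximality forces every bad $k$-set to meet $Y$, so every $k$-subset $S \subseteq \overline{X \cup Y}$ is not bad, which is the desired conclusion. It remains to show that, with high probability over $\G$, every admissible $X$ satisfies $|\mathcal{F}| \leq K/(k p^k)$, so that $|Y| \leq k |\mathcal{F}| \leq K/p^k$.

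Two probabilistic inputs drive the argument. First, since $\deg^*(S,X) \sim \text{Bin}(|X|, p^k)$, Chernoff's bound (Lemma~\ref{lem:che}) gives $\mathbb{P}(S \text{ bad}) \leq 2\exp(-\alpha^2 p^k |X|/3) \leq 2 n^{-4k}$, where the last step uses $p^k |X| \geq \beta K \log n = 12 k \log n /\alpha^2$. Second, for pairwise disjoint $k$-subsets $S_1, \ldots, S_T$ of $\overline{X}$, the event $\{S_i \text{ bad}\}$ is determined by the edges between $S_i$ and $X$, and disjointness of the $S_i$ makes these edge sets disjoint; hence the events are mutually independent.

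Setting $T = \lfloor K/(k p^k) \rfloor + 1$, I would union-bound over the at most $n^{kT}$ ordered $T$-tuples of disjoint $k$-subsets of $\overline{X}$ to obtain
\[
\mathbb{P}\bigl(\exists\, T \text{ disjoint bad } k\text{-sets in } \overline{X}\bigr) \;\leq\; n^{kT}\bigl(2 e^{-\alpha^2 p^k|X|/3}\bigr)^T.
\]
The choice of $T$ gives $p^k T \geq K/k$, whence $\alpha^2 p^k |X| T/3 \geq \alpha^2 \beta K n/(3k) = 4n$. Combined with $kT \leq K/p^k + k \leq 2n/\log n$ and $T \leq n$ (both from $1/p^k \leq n/(K \log n)$), the right-hand side is at most $2^T n^{kT} e^{-\alpha^2 p^k |X| T/3} \leq e^n \cdot e^{2n} \cdot e^{-4n} = e^{-n}$. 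A union bound over the $\leq 2^n$ admissible sets $X$ gives total failure probability $(2/e)^n = o(1)$, so w.h.p.\ every admissible $X$ admits a maximal family of size at most $T-1 \leq K/(k p^k)$, producing $Y$ with $|Y| \leq K/p^k$.

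The main obstacle is absorbing the $2^n$ factor from the union bound over $X$: a naive union bound over single bad $k$-sets only yields a per-$X$ failure probability of order $n^{-O(k)}$, which is far too weak. Exploiting independence of disjoint bad events raises this bound to the $T$-th power, and calibrating $T \sim 1/p^k$ so that $kT \leq K/p^k$ and $\alpha^2 p^k |X| T/3 \geq 4n$ hold simultaneously is the technical heart of the argument; both requirements are consistent precisely because $p^k \geq K \log n/n$.
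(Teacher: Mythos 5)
Your proposal is correct and follows the same overall strategy as the paper's proof (concentration of common neighbourhoods, exploiting disjointness, union bound over $X$), but the two technical steps are executed differently, and the differences are worth noting.

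The paper's probabilistic core is the statement: for any $X$ with $|X|\ge\beta n$ and any family $M$ of $\ge K/(2kp^k)$ disjoint $k$-sets in $\c{X}$, the aggregate $\sum_{S\in M}\deg^*(S,X)\sim\Bin(|M||X|,p^k)$ is concentrated. It then takes separate maximum matchings $M^-$, $M^+$ in the two hypergraphs of $k$-sets with too few / too many common neighbours; a large $M^-$ would make the aggregate too small, a large $M^+$ would make it too large, forcing $|M^-|+|M^+|\le K/(kp^k)$. You instead bound $\mathbb{P}(S\text{ bad})$ for a single $k$-set via Chernoff and then use the mutual independence of the events $\{S_i\text{ bad}\}$ for pairwise disjoint $S_i$ (their witnessing edge sets $S_i\times X$ are disjoint) to bound the probability that $T$ disjoint bad $k$-sets coexist; you then take a single maximal family of disjoint bad $k$-sets, no need to split into $H^-$ and $H^+$. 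These are two genuinely different ways of exploiting the same disjointness structure. Your version is slightly more elementary and handles the two-sided deviation in one go; the paper's aggregate Chernoff is marginally tighter (a factor $2$ tail constant instead of $2^T$), though as your arithmetic shows the slack is ample since $T\le n$. Either way the exponent $\exp(-\alpha^2 p^k|X|T/3)$ is identical, the calibration $T\approx K/(kp^k)$ is the same, and the union bound over the $\le 2^n$ choices of $X$ closes both arguments.
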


\begin{proof}
	We first show that w.h.p. $\G \sim \Gnp$ satisfies
	\begin{enumerate}[label={($\ast$)}]
		\item \label{itm:dense-neighours-1}
			for any set~$X$ of at least $\beta n$ vertices and any family~$M$ of at least $K/(2kp^k)$ disjoint $k$-sets in~$\c{X}$, \nc{}{it holds that}
	$	\sum_{S \in M} \deg^*(S,X)  =  (1 \pm \alpha) p^k |M||X|$.
	\end{enumerate}

	To see this consider a set~$X$ with $|X| \geq \beta n$ and a family~$M$ of at least $K/(2kp^k)$ disjoint $k$-sets in~$\c{X}$.
	Let $Z = \sum_{S \in M} \deg^*(S,X)$, that is, $Z$ counts the number of pairs~$(S,x) \in M \times X$ for which $x \in N^*(S)$.
	For each $(S,x) \in M\times X$, the probability that $x \in N^*(S)$ is $p^k$.
	Since the events $x \in N^*(S,X)$ and $x' \in N^*(S',X)$ are independent for $(S,x) \neq (S',x')$, it follows that $Z \sim \Bin(|M||X|,p^k)$.
	Thus Lemma~\ref{lem:che} implies that
	\begin{align*}
	\mathbb{P}\nc{[}{(}Z \neq (1 \pm \alpha) p^k |M| |X| \nc{]}{)}
	&\leq 2\exp\left({-\frac{\alpha^2}{3}p^k|M||X|}\right) 
	\leq 2\exp\left({-\frac{\alpha^2\beta}{3}p^kn|M|}\right).
	\end{align*}
	By taking the union bound of these events over all such sets $X$ and families $M$, we see that the probability that \ref{itm:dense-neighours-1} fails is at most 
	\begin{align*}
	&\quad \sum_{m = K/(2kp^k)}^{n/k} n^{km}\cdot 2^{n}\cdot 2 \exp\left({-\frac{\alpha^2\beta}{3}p^knm}\right)
	\\&\leq \sum_{m = K/(2kp^k)}^{n/k} \exp\left(km\log n + n{-\frac{\alpha^2\beta}{3}p^knm}\right)
	\\&= \sum_{m = K/(2kp^k)}^{n/k} \exp \left( \left(\log n + \frac{n}{km} {-\frac{\alpha^2\beta}{3k}p^kn}\right) {km} \right)
	\\&\leq \sum_{m = K/(2kp^k)}^{n/k} \exp\left(\left(\log n + \left(\frac{2}{K} {-\frac{\alpha^2\beta}{3k}}\right)p^kn\right) {km}\right)
	\\&\leq \sum_{m = K/(kp^k)}^{n/k} n^{-km} \le n^{-1},
	\end{align*}
	where the penultimate inequality uses the fact that ${\alpha^2\beta}/{3k} = 4/K $ and $p^k \geq K\log n/n$.
	Thus we can assume that $\G$ satisfies~\ref{itm:dense-neighours-1}.
	We will deduce the lemma from this property.
	
	Suppose that we are given a vertex set~$X$ with $|X| \geq \beta n$.
	Let $H^-$ be an auxiliary $k$-uniform hypergraph on vertex set~$\c{X}$ such that a $k$-set~$S$ is an edge in~$H^-$ if $\deg^*(S,X) < (1-\alpha) p^k |X|$.
	Similarly, let $H^+$ be an auxiliary $k$-uniform hypergraph on vertex set~$\c{X}$ such that a $k$-set~$S$ is an edge in~$H^+$ if $\deg^*(S,X) > (1+\alpha) p^k |X|$.
	Let $M^-$ and $M^+$ be matchings of maximum size in $H^-$ and~$H^+$, respectively.
	Then \ref{itm:dense-neighours-1} implies that $|M^-|+|M^+| \leq K /(kp^{k})$.
	Let $Y = \bigcup_{S \in M^- \cup M^+}S$ and note that $|Y| \leq K/p^k$.
	By maximality of $M^-$ and $M^+$, every $k$-set $S$ in $\c{X\cup Y}$ satisfies $\deg^*(S,X) =(1 \pm \alpha) p^k|X|$, as desired.
\end{proof}

\section{Proof of Lemma~\ref{lem:absorption}} \label{sec:5}

In this section we prove Lemma~\ref{lem:absorption}, that is, every small vertex set can be covered by few monochromatic cycles.
We start by setting up a few auxiliary lemmas.

\begin{theorem}[Sárközy~\cite{Sar11}]\label{thm:sarkozy}
	Let $G$ be a graph of independence number $\alpha$.
	Then any $r$-edge-colouring of $G$ admits a partition into at most $25(\alpha r)^2 \log (\alpha r)$ monochromatic cycles.
\end{theorem}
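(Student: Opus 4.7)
The plan is to mimic the Erdős--Gyárfás--Pyber argument for $r$-edge-coloured complete graphs, with the parameter $r$ effectively replaced by $\alpha r$. The reduction proceeds via the regularity method, exploiting the fact that bounded independence number is inherited by the reduced graph of a regular partition.

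First, apply the coloured Szemerédi regularity lemma to $G$ with its $r$-edge-colouring, producing a partition $V_0, V_1, \dots, V_m$ with $|V_0|\le \eps n$ and $|V_1|=\dots=|V_m|$ such that for every pair $i \neq j$ and every colour~$c$, the bipartite graph in colour~$c$ between $V_i$ and $V_j$ is $\eps$-regular. Form the $r$-edge-coloured reduced graph~$R$ on $[m]$ by placing an edge of colour~$c$ between $i$ and~$j$ whenever the corresponding bipartite pair has density at least some fixed threshold~$d$. A standard cluster-selection argument gives $\alpha(R)\le \alpha$: from any independent set of~$R$ (ignoring colours), one picks a single vertex per cluster, avoiding a negligible number of irregular or edge-containing configurations, to obtain an independent set in~$G$ of essentially the same size.

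The combinatorial heart of the argument is to show that every $r$-edge-coloured graph~$R$ of independence number at most~$\alpha$ admits a cover of all but an $\eps$-fraction of its vertices by at most $25(\alpha r)^2 \log(\alpha r) - O(1)$ vertex-disjoint monochromatic connected matchings (matchings contained in a single monochromatic component). This is where the factor $\alpha r$ enters: one iterates the Erdős--Gyárfás--Pyber argument, replacing Gyárfás's theorem on monochromatic components of $r$-edge-coloured $K_n$ (size at least $n/(r-1)$) by its analogue for graphs of independence number~$\alpha$, which guarantees a monochromatic component of size $\Omega(n/(\alpha r))$. Inside each such component one extracts a connected matching of comparable size (via Erdős--Gallai or König), removes the covered vertices, and recurses; the geometric iteration yields the desired count.

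Each monochromatic connected matching in~$R$ is then lifted to a single long monochromatic cycle in~$G$ by the blow-up/cycle lemma of \L uczak, covering a $(1-o(1))$-fraction of the vertices in the associated clusters. The residual $o(n)$ vertices (those in $V_0$ together with the few missed inside each cluster) can then be covered by trivial single-vertex cycles, absorbed into the slack built into the bound $25(\alpha r)^2 \log(\alpha r)$; alternatively, one iterates the whole procedure on this small leftover to obtain a genuine partition. The main obstacle is the combinatorial step above: obtaining the correct quantitative generalisation of Gyárfás's component theorem for graphs with independence number~$\alpha$, and carefully balancing the geometric iteration so that the final bound lands at exactly $25(\alpha r)^2 \log(\alpha r)$, matching the Erdős--Gyárfás--Pyber bound with $\alpha r$ in place of $r$.
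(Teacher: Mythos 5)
The paper does not prove this theorem; it is S\'ark\"ozy's result from~\cite{Sar11}, cited and used as a black box (in the proof of Lemma~\ref{lem:fine-absorption}). So there is no in-paper argument to compare against, and I evaluate your sketch on its own merits.

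Your endgame has a genuine gap. After the regularity step you are left with roughly $\eps n + o(n)$ uncovered vertices ($V_0$ together with the leftovers inside each cluster that the long cycles miss). Covering these by single-vertex degenerate cycles produces $\Theta(\eps n)$ cycles --- a quantity growing with~$n$ --- so it cannot be ``absorbed into the slack'' of a bound $25(\alpha r)^2\log(\alpha r)$ that is independent of~$n$. Your fallback of iterating the whole procedure on the leftover also does not obviously terminate in boundedly many rounds, since each regularity application again produces an $\eps$-fraction of exceptional vertices. One needs a fundamentally different endgame, typically a pre-built absorbing structure in the spirit of Erd\H{o}s, Gy\'arf\'as and Pyber. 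In fact you have identified the wrong step as the main obstacle: the Gy\'arf\'as-type monochromatic-component lemma for graphs of independence number~$\alpha$ is standard, whereas passing from an almost-cover to an exact partition is where the real work lies.

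Separately, note that the constant $25(\alpha r)^2\log(\alpha r)$ is exactly the Erd\H{o}s--Gy\'arf\'as--Pyber bound $25r^2\log r$ with $r$ replaced by $\alpha r$. That strongly suggests S\'ark\"ozy's proof is an elementary, regularity-free adaptation of the EGP argument (swapping the component lemma for its bounded-independence-number analogue and re-running the same absorption scheme), rather than the regularity / connected-matching reduction you propose, which loses constants and would not land on precisely the stated bound.
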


\begin{lemma}[{\cite[Lemma~2.1]{KMN+17} }]\label{lem:KMN+17}
	For every $r \geq 2$, $\beta >0$, $K = 4000r^4/\beta$, $C = (100r/\beta)^8$ and $n_0$ sufficiently large, the following holds.
	Let $G$ be a graph on $n \geq n_0$ vertices satisfying the following properties for $p   \geq C(\log n /n)^{1/2}$.
	\begin{enumerate}[\upshape(i)]
		\item \label{itm:approx-abs-edges} For any two disjoint $X,Y \subseteq V(G)$, such that $|X| \geq 192r/(\beta p)$ and $|Y|\geq  \beta n/(2r)$, it holds that $e(X,Y) \leq 5p|X||Y|/4$.
		\item \label{itm:approx-abs-triples} For every family~$\cL$ of $\ell$ disjoint pairs of vertices and every set $Y$ of~$3\ell$ vertices that is disjoint from each pair in~$\cL$, we have
		\begin{align*} 
		\sum_{\{v,w\} \in \cL} \deg^*(\{v,w\}, Y) \le \begin{cases}
		72 \ell \log n &\text{if } \ell \le 6 \log n / p^2,\\
		6\ell^2 p^2 &\text{otherwise.}
		\end{cases}
		\end{align*}
	\end{enumerate}
	Let $U,W \subseteq V(G)$ be disjoint with $|U| \geq \beta n$ and $|W| \leq \beta^4n/(100r)^4$ and
	\begin{enumerate}[\upshape(i)] \setcounter{enumi}{2}
		\item \label{itm:approx-abs-degree} $\deg(w,U) \geq (1 - 1/r^2)p|U|$ for every $w \in W$. 
	\end{enumerate}
	Then, for any $r$-edge-colouring of $G$, there is a collection $\cC$ of at most $3r^2$ monochromatic disjoint cycles, which together cover all but at most $K/p$ vertices of $W$ and $V(\cC) \subseteq U \cup W$.
\end{lemma}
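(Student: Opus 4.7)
My plan is to produce, separately for each colour~$i$, a small number of monochromatic colour-$i$ cycles that cover almost all of a ``colour-class'' $W_i\subseteq W$ while using additional vertices only from a reserved subset $U_i\subseteq U$; summing over the $r$ colours will give at most $3r^2$ pairwise vertex-disjoint cycles contained in $U\cup W$. The three hypotheses play distinct roles: (iii) is used in the initial partition of $W$ by majority colour, (i) controls densities when reserving ``connector'' sets inside each~$U_i$, and (ii) is the key tool ensuring that a greedy cherry-absorption of $W_i$ actually yields a vertex-disjoint family.

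For the partition, (iii) says each $w\in W$ has at most $p|U|/r^2$ non-neighbours in $U$, so by pigeonhole there is a colour $c(w)$ in which $w$ has at least $d:=(1-1/r^2)p|U|/r$ neighbours in~$U$. Let $W_i=c^{-1}(i)$ and reserve disjoint random subsets $U_1,\dots,U_r\subseteq U$ of size $\lfloor|U|/r\rfloor$; a Chernoff estimate (\lref{lem:che}) applied to each~$w$ lets me discard an $O(1/p)$-sized bad subset and assume every surviving $w\in W_i$ has at least $(1-O(1/r))p|U_i|/r$ colour-$i$ neighbours in $U_i$. This localisation to $U_i$ makes the disjointness of cycles across different colours automatic.

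For each colour $i$, I would then build colour-$i$ cycles through $W_i$ inside $G[U_i\cup W_i]$ in two phases. Phase~(a) reserves a connector set $U_i^\ast\subseteq U_i$ of size $\Theta(r/p)$ with the property that any two vertices of $U_i\setminus U_i^\ast$ share a colour-$i$ common neighbour in $U_i^\ast$; existence of such $U_i^\ast$ is obtained by combining (i) (to rule out atypically sparse pairs) with Lemma~\ref{lem:dense-neighbours} applied to a majority-colour restriction. Phase~(b) is a greedy cherry absorption: iterate over $w\in W_i$ and, if possible, pick two still-unused $u,u'\in U_i\setminus U_i^\ast$ with $wu,wu'$ both of colour~$i$, forming a vertex-disjoint colour-$i$ path $u\,w\,u'$. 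The process stalls only when the unused colour-$i$ neighbourhood of some~$w$ is covered by already-forbidden pairs; (ii) bounds the sum of $\deg^*$ over families of disjoint pairs, capping the number of ``popular'' pairs and hence forcing the uncovered residue of $W_i$ to be $O(r/p)$. Finally, the cherry-endpoints are stitched through $U_i^\ast$ by a Hall-type matching using the defining property of $U_i^\ast$, yielding at most $3r$ colour-$i$ cycles; a single ``clean-up'' cycle can absorb leftover connector vertices. Summing over colours: $|\cC|\leq 3r^2$, $V(\cC)\subseteq U\cup W$, and the total residue is at most $r\cdot O(r/p)\leq K/p$ for $K=4000r^4/\beta$.

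The main obstacle is Phase~(b): the requirement of vertex-disjointness across many rounds is exactly what makes (ii) indispensable, since once a cherry is committed, its three vertices are forbidden to all later~$w$. Without the triples-bound of (ii) one cannot rule out a scenario where a few ``over-popular'' pairs in $U_i\setminus U_i^\ast$ service too many $w\in W_i$ and then, once used, leave the remaining $w$'s stranded. Simultaneously controlling this budget against the reservation of $U_i^\ast$, so that the final cycle count stays at $3r^2$ and the uncovered residue stays within $K/p$, is the most delicate piece of book-keeping in the proof; by contrast, if one merely wanted a non-disjoint cover of~$W$ the argument would trivialise.
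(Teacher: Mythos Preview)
The paper does not give its own proof of this lemma. It merely remarks that the proof of Lemma~2.1 in~\cite{KMN+17} (stated there for $\G\sim\Gnp$) uses only the deterministic properties listed here as (i)--(iii), and therefore omits the argument. So there is no detailed proof in the paper to compare your outline against.

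Assessing your outline on its own terms, Phase~(a) contains a genuine gap. You want a connector set $U_i^\ast\subseteq U_i$ of size $\Theta(r/p)$ with the property that any two vertices of $U_i\setminus U_i^\ast$ share a \emph{colour-$i$} common neighbour in $U_i^\ast$, and you justify this by invoking Lemma~\ref{lem:dense-neighbours} ``applied to a majority-colour restriction''. This fails on two counts. First, Lemma~\ref{lem:dense-neighbours} is a w.h.p.\ statement about $\Gnp$, not a consequence of (i)--(iii); the whole point of the present formulation is that $G$ is a \emph{deterministic} graph, so you may not import additional random-graph lemmas. Second, and more fundamentally, none of (i)--(iii) says anything about edges, let alone colour-$i$ edges, \emph{inside} $U$: (i) is only an upper bound on edge counts, (ii) is only an upper bound on common-neighbour sums, and (iii) only lower-bounds degrees from $W$ into $U$. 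Nothing prevents the adversary from assigning colour~$i$ to no edge of $G[U]$ whatsoever, in which case no colour-$i$ connector set in $U$ can exist and your stitching step collapses. The same objection applies to ``a single clean-up cycle'' inside $U_i^\ast$.

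Your account of how (ii) is used in Phase~(b) is also off. The cherry step only requires each $w$ to have two unused colour-$i$ neighbours in $U_i$; since $|W|\le (\beta/100r)^4 n$ while the colour-$i$ degree of $w$ into $U_i$ is of order $p|U|/r$, this step never stalls for degree reasons alone and does not need (ii). In~\cite{KMN+17} property~(ii) enters at the \emph{linking} stage: the cycles alternate strictly between $W$ and $U$ (so no $U$--$U$ edges are used at all), and (ii) is applied with the disjoint pairs living on the $W$-side and the set $Y$ being the already-used $U$-vertices, bounding how much of the relevant common neighbourhoods has been consumed. Your instinct that (ii) is exactly what enforces vertex-disjointness is right, but the roles of $W$ and $U$ in its application need to be swapped, and the connector-set idea inside $U$ should be abandoned.
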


Note that Lemma~\ref{lem:KMN+17} is a slight strengthening of Lemma~2.1 in~\cite{KMN+17}, which was originally stated for random graphs~$\G\sim\Gnp$. 
The proof of Lemma~2.1 in~\cite{KMN+17} only relies on the fact that w.h.p. $\G\sim\Gnp$ satisfies conditions \ref{itm:approx-abs-edges}-\ref{itm:approx-abs-degree}.
Thus, we omit its proof.

\begin{lemma}\label{lem:fine-absorption}
	Let $r ,t\geq 2$ be integers and $G$ be a graph with disjoint vertex sets $U,W \subseteq V(G)$ with $|W| \le t$.
	Suppose that
	\begin{enumerate}[\upshape(i)]
		\item \label{itm:fine-abs-W-U-degreee} for each $r$-set $S \subseteq W$, we have $\deg^*_G(S,U) \geq 6r^{r+1}t$; \nc{}{and}
		\item \label{itm:fine-abs-XYdensity} any disjoint subsets $X,Y \subseteq U$ with $|X|,|Y| \geq t$ satisfy  $e_G(X,Y) > 0$.
	\end{enumerate}
	Then, for any $r$-edge-colouring of~$G$, there is a collection $\cC$ of at most $400 r^4 \log r$ monochromatic disjoint cycles such that $W \subseteq V(\cC) \subseteq U \cup W$ and {$| V(\cC)| \leq 3|W|$.}
\end{lemma}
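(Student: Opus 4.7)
The target bound $400 r^4 \log r$ strongly suggests reducing to Sárközy's theorem (Theorem~\ref{thm:sarkozy}) applied with independence number $\alpha \le 2r$. The plan is therefore to find an $r$-edge-coloured subgraph of $G$ consisting of $W$ together with a small set $U^* \subseteq U$ of size at most $2|W|$ such that $\alpha(G[U^* \cup W]) \le 2r$. Once this is in hand, Sárközy's theorem yields at most $25(2r\cdot r)^2\log(2r\cdot r) \le 400 r^4 \log r$ monochromatic cycles partitioning $V(G^*) = U^* \cup W$, and these cycles form the required family $\mathcal{C}$: they cover $W$, lie in $U \cup W$, and use $|U^*| + |W| \le 3|W|$ vertices in total.

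The degenerate case $|W| < r$ (where condition~(i) is vacuous) can be dispatched by covering each $w \in W$ by a singleton degenerate cycle, giving at most $r - 1 \le 400 r^4 \log r$ cycles with $|V(\mathcal{C})| = |W|$. For $|W| \ge r$, the substantive task is the construction of $U^*$. First, using condition~(ii), one sees that $\alpha(G[U]) < 2t$: any independent set of size at least $2t$ could be split into two halves of size $\ge t$ with no edges between them, contradicting~(ii). Via an iterative/greedy selection one can then extract $U^*$ with $|U^*| \le 2|W|$ and $\alpha(G[U^*]) \le r$. Second, using condition~(i)'s guarantee of $\ge 6r^{r+1}t$ common neighbours for every $r$-subset of $W$, one enriches $U^*$ to contain at least $r+1$ common neighbours of every $r$-subset $S \subseteq W$. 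These two properties of $U^*$ immediately yield $|I \cap U^*| \le r$ for any independent set $I$ in $G[U^* \cup W]$.

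\textbf{Main obstacle.} The hard part will be bounding the $W$-side of an independent set, $|I \cap W| \le r$, since the lemma imposes no structural assumption on $G[W]$: the set $W$ could be entirely edgeless in $G$, in which case a naive application of Sárközy to $G[U^* \cup W]$ would give far too many cycles. Resolving this must use condition~(i) in a more subtle way: if $|I \cap W| \ge r+1$, then for each $r$-subset $S \subseteq I \cap W$ the many common neighbours of $S$ in $U^*$ are excluded from $I$, and one must arrange the construction of $U^*$ so that this exclusion, applied across the many $r$-subsets of $I \cap W$, either creates an independent substructure in $U^*$ exceeding $\alpha(G[U^*]) \le r$ or forces an edge into $I$. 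Precisely balancing the greedy construction of $U^*$, the common-neighbour multiplicities from~(i), and the bipartite-hole bound from~(ii) to drive this contradiction is the crux of the proof.
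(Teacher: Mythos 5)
You correctly identified the target: the bound $400r^4\log r$ comes from S\'ark\"ozy's theorem applied with independence number roughly $2r$, and conditions (i) and (ii) should be used to drive that bound. However, the route you chose---finding $U^*\subseteq U$ with $|U^*|\le 2|W|$ so that $\alpha(G[U^*\cup W])\le 2r$ and then applying S\'ark\"ozy to the subgraph $G[U^*\cup W]$---cannot work, and you in fact put your finger on the reason in your ``main obstacle'' paragraph. If $G[W]$ is edgeless (which the hypotheses allow), then $W$ is an independent set in $G[U^*\cup W]$ of size $|W|$, and adding vertices of $U^*$ cannot destroy independent sets contained in $W$ since $U^*$ only supplies edges incident to $U^*$. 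No clever choice of $U^*$, nor any of the exclusion arguments you sketch, can rescue this: $\alpha(G[U^*\cup W])\ge |W|$ whenever $G[W]$ is empty, so the plan of applying S\'ark\"ozy to an induced subgraph of $G$ containing all of $W$ is a dead end whenever $|W|\gg r$.

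The paper's proof escapes this by \emph{not} applying S\'ark\"ozy to a subgraph of $G$. Instead it builds an auxiliary $r$-edge-coloured multigraph $H$ on vertex set $W$, where an edge $vv'$ of colour $j$ records that $v$ and $v'$ can be joined in colour $j$ through $U$ in \emph{many} ways---either at least $2t$ common neighbours in $U$ forming colour-$j$ paths $vuv'$, or a matching of $2t$ colour-$j$ paths $vuu'v'$. Condition (i) together with a pigeonhole over colours, and condition (ii) used as a ``bipartite no-hole'' statement to extract a large matching between two $3rt$-sets, show that any $2r$ vertices of $W$ span an edge of $H$, i.e.\ $\alpha(H)\le 2r-1$. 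S\'ark\"ozy is then applied to $H$ (not to $G$), giving at most $25(2r^2-r)^2\log(2r^2-r)\le 400r^4\log r$ monochromatic cycles partitioning $W$ in $H$; each such cycle in $H$ is then greedily lifted to a genuine monochromatic cycle in $G$ by replacing each $H$-edge by a length-$2$ or length-$3$ path through $U$, using the $2t$-fold redundancy to avoid collisions (since $t\ge |W|$). Each lifted cycle on $k$ vertices of $W$ uses at most $3k$ vertices, giving $|V(\cC)|\le 3|W|$. Your instincts about S\'ark\"ozy, the independence number $2r$, condition (ii) as a matching/hole tool, and the role of (i) were all on the right track; the missing idea is to move the independence-number argument to an auxiliary graph on $W$ encoding connectability rather than adjacency.
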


\begin{proof}
	Consider any $r$-edge-colouring of~$G$ with colours~$[r]$.
	Define an $r$-edge-coloured auxiliary (multi)graph~$H$ on~$W$ as follows.
	For each $j \in [r]$, we add an edge of colour $j$ between vertices $v,v' \in W$, if one of the following holds.
	\begin{enumerate}[(a)]
		\item \label{itm:fine-absneighbours-in-U} There are at least $2t$ vertices $u \in U$ such that $v u v'$ is a path of colour $j$.
		\item \label{itm:fine-abs-X-Y-matching} There is a matching $M$ of size $2t$ in $U$  such that $ v u u' v'$ (or $vu'uv'$) is a path of colour~$j$ for each $uu'\in M$.
	\end{enumerate}

	We claim that the independence number $\alpha(H)$ of $H$ is bounded by $2r-1$.
	Let $S$ be any $2r$-set of $W$.
	Partition $S$ into two $r$-sets $S_1$ and~$S_2$ with $S_i = \{v_{i,j}\colon j \in [r] \}$ for $i \in [2]$. 
	By~\ref{itm:fine-abs-W-U-degreee}, $N^*(S_i,U)$ has size at least $6r^{r+1}t$.
	By averaging, there exists $A_i \subseteq N^*(S_i,U)$ and colours $c_{i,j}$ for $j \in [r]$ such that $|A_i| \geq 6rt$ and $v_{i,j}u$ has colour $c_{i,j}$ for all $u \in A_i$ and $j \in [r]$. 
	If $c_{i,j} = c_{i,j'}$ for distinct $j,j' \in [r]$, then $v_{i,j}v_{i,j'}$ is an edge in~$H$ by~\ref{itm:fine-absneighbours-in-U}.
	Thus, without loss of generality, we may assume that $c_{i,j} = j$ for $i \in [2]$ and $j \in [r]$. 
	Let $A'_1 \subseteq A_1$ and $A'_2 \subseteq A_2$ be disjoint\nc{}{,} each of size~$3rt$.
	Let $M$ be a matching of maximum size between $A_1'$ and~$A_2'$ in~$G$.
	By~\ref{itm:fine-abs-XYdensity}, $M$ has size at least~$2rt$ and thus contains~$2t$ edges of some colour~$j \in [r]$.
	Then \ref{itm:fine-abs-X-Y-matching} yields that $v_{1,j} v_{2,j}$ is an edge in~$H$.
	Hence $\alpha(H) \le 2r-1$ as claimed.
	
	\tref{thm:sarkozy} implies that $H$ can be partitioned into a collection $\{F_i\}_{i \in [s]}$ of $s \leq 25(2r^2-r)^2 \log( 2r^2-r)\leq 400 r^4 \log r$ disjoint monochromatic cycles.
	\nc{}{By the definition of $H$ and as $t \geq |W|$, we can construct the desired collection of cycles covering $W$ greedily along the cycles $\{F_i\}_{i \in [s]}$.
		More precisely, suppose that $v_1v_2 \dots v_k v_1$ is a red cycle in $\{F_i\}_{i \in [s]}$. 
		Suppose we have already embedded vertices $v_1,\ldots,v_i$ on a red path $P_i$ that starts with $v_1$ and ends with $v_i$ such that $|V(P_i)| \leq 3i$.
		We extend this path to $v_{i+1}$ (with index modulo $k$) depending on whether case~\ref{itm:fine-absneighbours-in-U} or~\ref{itm:fine-abs-X-Y-matching} holds for the (red) edge $v_iv_{i+1} \in H$.
		If~\ref{itm:fine-absneighbours-in-U} holds with vertex $u$, then let $P_{i+1} = P_iuv_{i+1}$.
		If, on the other hand,~\ref{itm:fine-abs-X-Y-matching} holds with $u$ and  $u'$,  then let $P_{i+1} = P_iuu'v_{i+1}$.
		Note that this is possible as $t \geq|W| \geq k$.
		Using the same construction for each of the $F_i$'s, we are able obtain mutually disjoint cycles $\cC=\{C_i\}_{i \in [s]}$ satisfying $| V(\cC)| \leq 3|W|$.}
\end{proof}

Now we are ready to prove Lemma~\ref{lem:absorption}.
\begin{proof}[Proof of Lemma~\ref{lem:absorption}]
	Let $K= 24r^9/ \beta$.
	We claim that w.h.p. $\G \sim \Gnp$ satisfies the following.
	\begin{enumerate}[(a)]
		\item \label{itm:absorption-expected-neighbours} 
		For each $k \in \{1,r\}$, every $k$-set $S \subseteq V(\G)$ satisfies $\deg^*(S) \geq (1 - 1/r^4) p^{k} n $.
		\item \label{itm:absorption-edges}
		For two disjoint vertex sets $X,Y \subseteq V(\G)$, satisfying either of
		\begin{itemize}
			\item $|X|,|Y| \geq 144 \log n/p$, or
			\item $|X| \geq 768r/(\beta p)$ and $|Y|\geq  \beta n/(8r)$,
		\end{itemize}
		we have  $e(X,Y) = (1 \pm 1/4)p|X||Y|$.
		\item \label{itm:absorption-triples} For every family~$\cL$ of $\ell$ disjoint pairs of vertices and every set~$Y$ of~$3\ell$ vertices that is disjoint from each pair in~$\cL$, we have
		\begin{align*}
		\sum_{\{v,w\} \in \cL} \deg^*(\{v,w\}, Y) \le \begin{cases}
		72 \ell \log n &\text{if } \ell \le 6 \log n / p^2, \\
		6\ell^2 p^2 &\text{otherwise.}
		\end{cases}
		\end{align*}
		\item \label{itm:absorption-neighbours-in-U}
		For any vertex set $X$ of size $|X| \geq \beta n$, there is a set $Y \subseteq \overline{X}$ of size at most $K/p^{r}$ such that, for each $k \in \{1,r\}$, each $k$-set $S \subseteq \overline{X\cup Y}$ satisfies $\deg^*(S,X) =(1 \pm 1/r^4) p^k|X|$.
	\end{enumerate}
	Here, \ref{itm:absorption-expected-neighbours} is a \nc{straight forward}{straightforward} consequence of Lemma~\ref{lem:che}; \ref{itm:absorption-edges} follows from Lemma~\ref{lem:densityXY} with $1/4, \beta/(8r)$ playing the roles of $\alpha,\beta$;~\ref{itm:absorption-triples} follows from Lemma~\ref{lem:density_triples}; and~\ref{itm:absorption-neighbours-in-U} follows \nc{}{by choice of $K$} from two applications of Lemma~\ref{lem:dense-neighbours} with $k \in \{ 1,r \}$ and $\alpha=1/r^4$.
	We will deduce the lemma from these properties.
	
	Consider any $r$-edge-colouring of~$\G$ and any disjoint subsets of vertices $U$ and $W$  with $|U| \geq \beta n$ and $|W| \leq (\beta/400r)^{4} n$.
	By~\ref{itm:absorption-neighbours-in-U}, there is a set \nc{$W_1 \subseteq W$ with $|W_1| \leq K/ p^{r}$ such that, for each $k \in \{1,r\}$, each $k$-set~$ S \subseteq W \setminus W_1$ satisfies}{$Y \subseteq \overline{U}$ such that $W_1 \coloneqq W \cap Y$ has the following properties.
		It holds that $|W_1| \leq K/ p^{r}$ and, for each $k \in \{1,r\}$, every $k$-set~$ S \subseteq W \setminus W_1$ satisfies}
	\begin{align} \label{equ:degree-V-into-U}
	\deg^*(S,U) \ge (1 - 1/r^4) p^{k} |U|.
	\end{align}
	
	We first cover $W_1$.
	Let $t_1 = K/p^r$.
	By the choices of $K,r$ and $p$, we have
	\begin{align*}
	p^rn \geq 100 r^{r+1}t_1.
	\end{align*}
	As $|W_1| \leq K/ p^{r} = t_1$, it follows that, for any $r$-set $S \subseteq W_1$, we have
	\begin{align*}
	\deg^*(S,\c{W_1})
	\ge  \deg^*(S) - |W_1|
	&\overset{\ref{itm:absorption-expected-neighbours}}{\geq} (1-1/r^4)p^rn - t_1 \geq 6 r^{r+1} t_1.
	\end{align*}
	Note that~\ref{itm:absorption-edges} implies $e(X,Y) >0$ for all disjoint $X,Y \subseteq V(\G)$ with $|X|,|Y| \geq t_1$.
	By Lemma~\ref{lem:fine-absorption} with $\c{W_1},W_1,t_1$ playing the roles of $U,W,t$, there exists a collection~$\cC_1$ of at most ${400} r^4 \log r$ monochromatic disjoint cycles, such that $W_1 \subseteq V(\cC_1)$ and {$|V(\cC_1)| \leq 3 |W_1|$.}
	
	Let $W_2 = W \sm  V(\cC_1)$ and $U' =  U\sm V(\cC_1)$.
	Note that 
	\begin{align}
	|U \setminus U'| = |V(\cC_1) \cap U| \leq 2|W_1|\leq 2K/p^{r} \le  p^{r} |U| / r^5
		\label{eqn:U-U'} 
	\end{align}
	as the definition of~$p$ implies that $p^{2r} |U|/r^5 \ge 2^{6r} r^{10r}/\beta \ge 2K$.
	We pick a random partition $\{U_2,U_3\}$ of~$U'$ by assigning each $u \in U'$ independently at random to $U_2$ with probability~$1/2$ and to $U_3$ otherwise.
	A standard application of Lemma~\ref{lem:che} shows that w.h.p. the following holds for $i =2,3$:
	\begin{enumerate}[label={{(c$_{\arabic*}$)}}]
		\item \label{itm:U_i-geq-betan/4} $|U_i| \geq (1-1/r^4)|U'|/2 \ge \beta n /4$; and
		\item \label{itm:c_2} 	$\deg^*(S,U_i) \geq (1-1/r^4)\deg^*(S,U')/2$ for each $k \in \{1,r\}$, every $k$-set~$ S \subseteq W \setminus W_1$.
	\end{enumerate}
	We fix a partition  $\{U_2,U_3\}$ of~$U'$ with these properties.
	For each $k \in \{1,r\}$, every $k$-set~$ S \subseteq W \setminus W_1$ satisfies
	\begin{align}
	\nonumber 
		\deg^*(S,U_i) & 
		\overset{\mathclap{\text{\ref{itm:c_2}}}}{\ge} (1-1/r^4)\deg^*(S,U')/2 
		\ge (1-1/r^4)  ( \deg^*(S,U)- 	|U \setminus U'|) /2		  
		\\
		& \overset{\mathclap{\text{\eqref{equ:degree-V-into-U}, \eqref{eqn:U-U'}}}}{\ge} (1-2/r^4 - 1/r^5) p^{k} |U| /2	
		 \overset{\mathclap{\text{\ref{itm:U_i-geq-betan/4}}}}{\ge} (1 - 1/r^2) p^k |U_i|.	
	\label{eqn:equ:S-deg-W2-to-Ui} 	
	\end{align}

	Now we cover the majority of the vertices of~$W_2$ using~$U_2$.
	We apply Lemma~\ref{lem:KMN+17} with $U_2,W_2,\beta/4$ playing the roles of $U,W,\beta$.
	(This is possible as the conditions~\ref{itm:approx-abs-edges}, \ref{itm:approx-abs-triples} and~\ref{itm:approx-abs-degree} of Lemma~\ref{lem:KMN+17} are satisfied by properties \ref{itm:absorption-edges}, \ref{itm:absorption-triples} and~\eqref{eqn:equ:S-deg-W2-to-Ui}.)
	Thus there is a collection $\cC_2$ of at most $3r^2$ monochromatic disjoint cycles, which together cover all but at most $16000r^{4}/(\beta p)$ vertices of $W_2$ and $V(\cC_2) \subseteq U_2 \cup W_2$.
	
	It remains to cover $W_3 = W_2 \sm V(\cC_2)$.
	Let $t_2 = 16000r^{4}/(\beta p)$.
	For any $r$-set $S \subseteq W_3$,
	\begin{align*}
	\deg^*(S,U_3)  \overset{\mathclap{\text{\eqref{eqn:equ:S-deg-W2-to-Ui}}}}{\geq}
	(1 - 1/r^2) p^r |U_3| \overset{\mathclap{\text{\ref{itm:U_i-geq-betan/4}}}}{\geq} p^r \beta n /8 \ge	
	6 r^{r+1} t_2,
	\end{align*}
	where the last inequality holds by our choice of~$p$.
	Thus, as above, by Lemma~\ref{lem:fine-absorption} with $ U_3,W_3,t_2$ playing the roles of $U,W,t$, there exists a collection $\cC_3$ of at most ${400} r^4 \log r$ monochromatic disjoint cycles such that $W_3 \subseteq V(\cC_3) \subseteq U_3 \cup W_3$.
	
	Hence there are at most ${400}r^4 \log r + 3r^2 + {400}r^4 \log r \le {900} r^4 \log r$ disjoint monochromatic cycles $\cC \coloneqq \cC_1 \cup \cC_2 \cup \cC_3$, which together cover $W$.
	Note that 
	\begin{align*}
	V(\cC) \sm (U \cup W) = V(\cC_1) \sm (U \cup W) = V(\cC_1) \setminus W_1
	\end{align*}
	and hence 
	\begin{align*}
	|V(\cC) \sm (U \cup W)| = |V(\cC_1) \setminus W_1|
	\overset{\mathclap{\text{\eqref{eqn:U-U'}}}}{\le} 2K/p^r =48r^9/( \beta p^r).
	\end{align*}
	This finishes the proof of Lemma~\ref{lem:absorption}.
\end{proof}

\section{Proof of Lemma~\ref{lem:approximate-partition}} \label{sec:6}
In this section we prove Lemma~\ref{lem:approximate-partition}, i.e. we find a collection of monochromatic cycles that robustly partitions most of the vertices.
Before we set up definitions and tools, let us outline the approach.
Consider an $r$-edge-coloured graph $\G \sim \Gnp$. 
We apply the sparse regularity lemma to obtain a balanced vertex partition $\cV = \{V_i\}_{i \in [t]}$ of $\G$ in which almost every pair $(V_i,V_j)$ is (sparse\nc{}{ly}) regular. 
This allows us to define an $r$-edge-coloured reduced graph $R$ on~$[t]$, that encodes which pairs are regular and dense in one (or more) of the colours.
An easy calculation shows that (by deleting some edges) we may assume $R$ to be the union of at most $4r^2$ monochromatic components with $\delta(R) \geq 2t/3$.
Thus $R$ contains a perfect matching~$R'$.
{Our aim is to find a monochromatic cycle $C_i$ for each monochromatic component~$R_i$ of~$R$, such that the edges of $C_i$ are contained in pairs $(V_j,V_{j'})$ with $jj' \in E(R_i)$.}
The union of these $C_i$'s will form a monochromatic cycle partition. 
We use the sparse blow-up lemma of Allen, B\"ottcher, H\'an, Kohayakawa and Person~\cite{ABH+16} (Lemma~\ref{lem:blow-up}) to find these cycles. 
As usual, there is a small set $W \subset V(\G)$ whose vertices do not behave well enough to be included this way.
For example, we would need to remove vertices so that $(V_i,V_j)$ is super-regular for all $ij \in E(R')$ for our application of Lemma~\ref{lem:blow-up}.
(In the grand scheme, the set $W$ will be covered by Lemma~\ref{lem:absorption}. So we can ignore it in the following.)
We define the set $U$ by selecting each vertex of  $V(\G) \sm W$ with probability $2^{-11}$.
It remains to show that for any small set $U^+ \subseteq V(\G)$ and $U' \subset U$, the graph $\G - (W \cup U^+ \cup U')$ admits a partition into roughly $4r^2$ monochromatic cycles.
By the choice of~$U$ and as $U^+$ is small, the remainder of $\cV$ is still regular and fairly well balanced for our application of Lemma~\ref{lem:blow-up}.

\subsection{Sparse regularity and the blow-up lemma}
Our exposition of sparse regularity and the blow-up lemma closely follows \nc{}{the one of Allen, B{\"o}ttcher, H{\`{a}}n, Kohayakawa and Person}~\cite{ABH+16}.
Let $G=(V,E)$ be a graph and~$A$ and~$B$ be disjoint subsets of~$V$. 
For $0<p<1$, we define the \emph{$p$-density} of the pair $(A,B)$ to be $d_{G,p}(A,B)=e_G(A,B)/(p|A||B|)$. 
The pair $(A,B)$ is \emph{$(\eps,d,p)$-lower-regular} (in~$G$) if we have $d_{G,p}(A',B')\ge d-\eps$ for all $A'\subseteq A$ with $|A'|\ge\eps|A|$ and $B'\subseteq B$ with $|B'|\ge\eps |B|$.
Similarly, we say that $(A,B)$ is \emph{$(\eps,d,p)$-fully-regular}, if  $d_{G,p}(A',B')= d\pm\eps$ for all $A'\subseteq A$ with $|A'|\ge\eps|A|$ and $B'\subseteq B$ with $|B'|\ge\eps |B|$.\footnote{
	Note that Allen et al.~\cite{ABH+16} use the name regular pair to denote what we call a lower-regular pair.
	On the other hand, the standard term for a fully-regular pair is just a regular pair.
} 
We define a pair to be a sparse super-regular pair if it is a sparse lower-regular
pair and satisfies a minimum degree condition.
\begin{definition}[Sparse super-regularity]
	A pair $(A,B)$ in $G\subset\G$ is called \emph{$(\eps,d,p)$-super-regular} (in~$G$) if it
	is $(\eps,d,p)$-lower-regular and, for every $u\in A$ and $v\in B$, we have
	\begin{align*}
	\deg_G(u,B) &>(d-\eps) \max\{p |B|, \deg_{\G}(u,B)/2\}; \\
	\nonumber \deg_G(v,A) &>(d-\eps) \max\{p |A|, \deg_{\G}(v,A)/2\}.
	\end{align*}
\end{definition}
For our purpose, $\G$ will often be $G(n,p)$.
We remark that the term $(d-\eps)p|B|$ is a natural lower bound in the above
minimum degree condition by the following fact, which
easily follows from the definition of regularity.
\begin{fact}\label{fac:super-regular}
	Let~$(A,B)$ be an $(\eps,d,p)$-lower-regular pair. 
	Then $\deg_G(a,B) < (d-\eps)p|B|$ for at most $\eps|A|$ vertices $a \in A$.
\end{fact}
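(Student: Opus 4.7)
The plan is to argue by contradiction in the standard way for regularity: assume more than $\eps|A|$ vertices of $A$ witness low degree into $B$ and use them as the exceptional subset $A'$ in the definition of $(\eps,d,p)$-regularity, with the full set $B$ playing the role of $B'$.

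More concretely, first I would let $A' \coloneqq \{ a \in A : \deg_G(a,B) < (d-\eps) p |B| \}$ and suppose towards a contradiction that $|A'| > \eps |A|$. Since $|B| \ge \eps |B|$, the pair $(A', B)$ satisfies the size hypotheses of the regularity definition, so $(\eps,d,p)$-regularity yields
\begin{equation*}
e_G(A', B) \;\ge\; (d-\eps)\, p\, |A'|\, |B|.
\end{equation*}
On the other hand, by the very definition of $A'$,
\begin{equation*}
e_G(A', B) \;=\; \sum_{a \in A'} \deg_G(a, B) \;<\; |A'| \cdot (d-\eps)\, p\, |B|,
\end{equation*}
which directly contradicts the previous inequality. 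Hence $|A'| \le \eps|A|$, as claimed.

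There is no real obstacle here: the fact is an immediate consequence of the definition, and the only subtlety is being careful with the strict versus non-strict inequalities (more than $\eps|A|$ vertices in $A'$ is enough to meet the $\ge \eps|A|$ size requirement of the regularity condition, and the strict inequality in the definition of $A'$ translates into the strict inequality needed for the contradiction).
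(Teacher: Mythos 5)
Your proof is correct and is exactly the standard argument the paper has in mind; the paper itself gives no explicit proof, merely remarking that the fact ``easily follows from the definition of regularity.'' Your use of the low-degree set $A'$ together with all of $B$ as the test pair, and the strict-versus-non-strict bookkeeping, is the right way to make that remark precise.
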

The next lemma also follows from the definition of regular pairs.
\begin{lemma}[Slicing lemma]
	\label{lem:slice}
	Let $(A,B)$ be an $(\eps,d,p)$-lower-regular pair and $A'\subseteq A$, $B'\subset
	B$ be sets of sizes $|A'|\ge\alpha|A|$, $|B'|\ge\alpha|B|$. Then
	$(A',B')$ is $(\eps/\alpha,d,p)$-lower-regular.
\end{lemma}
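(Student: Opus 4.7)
The plan is to show $(A', B')$ is $(\eps/\alpha, d, p)$-regular by unfolding the definition and reducing directly to the $(\eps, d, p)$-regularity hypothesis for $(A, B)$. Concretely, I would pick arbitrary subsets $A'' \subseteq A'$ and $B'' \subseteq B'$ with $|A''| \geq (\eps/\alpha)|A'|$ and $|B''| \geq (\eps/\alpha)|B'|$, and aim to verify $d_{G,p}(A'', B'') \geq d - \eps$.

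The key observation is that $A''$ and $B''$ are also subsets of $A$ and $B$ respectively, so they are eligible witnesses for the regularity of $(A, B)$ provided they satisfy the corresponding size thresholds. Here the hypothesis $|A'| \geq \alpha |A|$ gives
\[
|A''| \;\geq\; \frac{\eps}{\alpha}\,|A'| \;\geq\; \frac{\eps}{\alpha}\cdot\alpha |A| \;=\; \eps\,|A|,
\]
and analogously $|B''| \geq \eps|B|$. Plugging $A''$ and $B''$ into the definition of $(\eps, d, p)$-regularity of $(A, B)$ then directly yields $d_{G,p}(A'', B'') \geq d - \eps$, which is exactly the required bound for $(A', B')$.

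There is essentially no obstacle here: the lemma is a one-line consequence of the definition, since only a lower bound on the $p$-density is demanded (this being the ``lower regularity'' convention flagged in the paper's footnote), so one never has to argue about density jumping upward on small subsets. The entire proof amounts to the computation $(\eps/\alpha)\cdot\alpha = \eps$ together with the inclusions $A'' \subseteq A$ and $B'' \subseteq B$.
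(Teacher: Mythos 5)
Your proof is correct and is precisely the one-line verification the paper is alluding to when it states that the Slicing Lemma ``also follows from the definition of regular pairs'' and omits a proof. One minor slip in the final sentence: the bound required for $(\eps/\alpha,d,p)$-regularity of $(A',B')$ is $d_{G,p}(A'',B'')\ge d-\eps/\alpha$, not $d-\eps$; since $A'\subseteq A$ forces $\alpha\le 1$, your stronger bound $d-\eps\ge d-\eps/\alpha$ does imply it, but it is not ``exactly the required bound.''
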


We say that a graph $G$ with density $p$ is $(\eta,D)$-\emph{upper-uniform with density $p$}, if, for all disjoint sets $U$ and~$W$ with $|U|,|W| \ge \eta |V(G)|$, we have $e_G(U,W)\le D p|U||W|$.
We will use the following sparse regularity lemma.

\begin{lemma}[{Sparse regularity lemma~\cite[Lemma 6.3]{ABH+16}}]\label{lem:sparse-regularity}
	For any real $D,\eps>0$, integers $r$ and $t_0$, there exist $\eta= \eta_{\ref{lem:sparse-regularity}}(D,\eps,r,t_0)>0$ and $t_1=t_{\ref{lem:sparse-regularity}}(D,\eps,r,t_0)$ with the following property.
	Let $\G$ be an $r$-edge-coloured, $(\eta,D)$-upper-uniform graph with density $p$ on at least $t_0$ vertices.
	Then there is a partition $\cV= \{V_i\}_{i \in [t]}$ of $V(\G)$ with the following properties.
	\begin{enumerate}[(a)]
		\item $t_0\le t \le t_1$;
		\item $||V_i|-|V_j|| \leq 1$ for all $i,j \in [t]$;
		\item all but at  most $\eps t^2$ pairs $(V_i,V_j)$ are $(\eps,d,p)$-fully-regular in each of the $r$ colours for some possibly different $d$.
	\end{enumerate}
\end{lemma}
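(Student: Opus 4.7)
The plan is to execute the classical Szemer\'edi energy-increment argument in the $p$-normalised sparse setting, with $(\eta,D)$-upper-uniformity playing the role of the trivial density-$\leq 1$ bound from the dense case. For each colour $c \in [r]$, let $\G_c$ denote the spanning subgraph of $\G$ consisting of the edges of colour $c$. For a partition $\cP$ of $V(\G)$ into parts of size at least $\eta n$, I would define the $p$-normalised index
\[
q(\cP) = \sum_{c \in [r]} \sum_{\substack{V_i, V_j \in \cP \\ i \neq j}} \frac{|V_i||V_j|}{n^{2}}\, \min\bigl\{d_{\G_c, p}(V_i, V_j),\, 2D\bigr\}^{2}.
\]
Truncating at $2D$ is essential because individual $p$-densities can be as large as $1/p$; the upper-uniformity hypothesis, however, confines all but a vanishing fraction of the weighted mass of $q$ below this threshold, giving $q(\cP) \leq 4 r D^{2}$ throughout the iteration.

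I would then iterate the following refinement step. Start from an arbitrary equipartition of $V(\G)$ into $t_0$ parts, and at step $k$ stop if the current $\cP_k$ already satisfies the conclusion; otherwise some colour $c$ has more than $\eps t^{2}/r$ pairs $(V_i, V_j)$ failing to be $(\eps, d_{\G_c,p}(V_i,V_j), p)$-regular in $\G_c$. Each such pair supplies witness subsets $A \subseteq V_i$, $B \subseteq V_j$ of relative size at least $\eps$ with $d_{\G_c,p}(A,B) < d_{\G_c,p}(V_i,V_j) - \eps$, and a pairwise Cauchy--Schwarz applied to the common refinement $\cP_{k+1}$ along all such witness subsets, over every pair and every colour, yields $q(\cP_{k+1}) \geq q(\cP_k) + \eps^{4}$. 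Since $q \leq 4 r D^{2}$, the iteration halts after at most $4 r D^{2}/\eps^{4}$ rounds; each round multiplies the number of parts by a factor at most $2^{rt^{2}}$, producing the usual tower-type bound for $t_1$. Choosing $\eta$ smaller than $1/t_1$ keeps all parts large enough for the upper-uniformity estimate on $q$ to apply throughout, and a final balancing step redistributing any leftover vertices makes the part sizes equal up to one, at the cost of only a mild, absorbable degradation of the regularity parameters.

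The main obstacle is precisely this control of the index in the sparse regime. In the dense setting one has $q \leq 1$ automatically, whereas here a single atypically dense pair (for instance, a planted clique) could in principle dominate the sum and break the energy-increment step. The truncation at $2D$ combined with $(\eta,D)$-upper-uniformity quarantines exactly this pathology; once it is in place, the dense-case book-keeping transfers essentially verbatim after the global normalisation by $p$, and the multi-colour aspect is absorbed simply by summing the contributions of the $r$ colours inside the definition of $q$.
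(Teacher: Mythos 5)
This lemma is not proved in the paper; it is quoted verbatim as Lemma~6.3 of Allen, B\"ottcher, H\`an, Kohayakawa and Person~\cite{ABH+16}, so there is no proof of record here to compare your argument against. That said, your sketch does follow the standard Kohayakawa--R\"odl-style energy-increment route to sparse regularity, which is indeed how results of this type are proved in the literature, so the overall strategy is sound.

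One technical point worth flagging: the truncation $\min\{d_{\G_c,p}(V_i,V_j),2D\}^2$ in your index $q$ is not a harmless safety net. The function $x\mapsto\min\{x,2D\}^2$ is not convex (it has a concave kink at $x=2D$), so if the truncation ever actually took effect the Cauchy--Schwarz step that guarantees $q$ does not decrease under refinement would fail. What really makes the argument go through is the last thing you say almost in passing: the number of iterations, and hence the smallest part sizes encountered, is bounded a priori, so one can choose $\eta$ tower-type small in terms of $D,\eps,r,t_0$ so that $(\eta,D)$-upper-uniformity applies to every witness subset arising in the entire iteration. With that choice every $p$-density you square is genuinely at most $D$, so the truncation never kicks in and you may as well drop it, using the untruncated index $q(\cP)=\sum_c\sum_{i\neq j}\tfrac{|V_i||V_j|}{n^2}d_{\G_c,p}(V_i,V_j)^2\leq rD^2$ throughout. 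I would rephrase your argument to make it explicit that $\eta$ is fixed \emph{after} the a priori iteration bound is known, that this bound forces upper-uniformity on all relevant pairs at every step, and then run the dense-case book-keeping with $q$ untruncated; as currently written, the claim that "truncating at $2D$ is essential" is misleading and, taken at face value, would leave a real gap in the monotonicity-under-refinement step.
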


The sparse blow-up lemma has the (reasonable) requirement that neighbourhoods of vertices inherit regularity.

\begin{definition}[Regularity inheritance]
	Let $A$, $B$ and $C$ be vertex sets in
	$G\subset\G$, where~$A$ and~$B$ are disjoint and~$B$ and~$C$
	are disjoint, but we do allow $A=C$. We say that $(A,B,C)$ has
	\emph{one-sided $(\eps,d,p)$-inheritance} if for each $u\in A$, the pair
	$\big(N_\G(u,B),C\big)$ is $(\eps,d,p)$-lower-regular.
\end{definition}

{The next lemma states that there are only few vertices in $\Gnp$ which do not inherit regularity.}

\begin{lemma}[One-sided regularity inheritance in $\Gnp$,~{\cite[Lemma 1.26]{ABH+16}}]
	\label{lem:oneRI}
	For each $\eps',d >0$, there are $\eps_0 = \eps_{\ref{lem:oneRI}}(\eps',d) >0$ and
	$C = C_{\ref{lem:oneRI}}(\eps',d)$ such that, for all $0<\eps<\eps_0$ and $0<p<1$, w.h.p.
	$\G \sim \Gnp$ has the following property. 
	
	Let $G\subset\G$ be a graph and $X,Y$ be disjoint subsets of
	$V(\G)$. 
	If $(X,Y)$ is $(\eps,d,p)$-lower-regular in~$G$ and
	\[|X|\ge C\max\big(p^{-2},p^{-1}\log n\big) \quad\text{and}\quad|Y|\ge Cp^{-1}\log (en/|X|),\] 
	then there are at most $Cp^{-1}\log (en/|X|)$ vertices $z\in V(\G)$ such that 
	the pair $\big(N_\G(z,X),Y\big)$ is not $(\eps',d,p)$-lower-regular in~$G$.
\end{lemma}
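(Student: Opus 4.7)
The plan is to prove Lemma~\ref{lem:oneRI} via the standard Kohayakawa-R\"odl argument for regularity inheritance in sparse random graphs. The crucial observation is that, once $X \subseteq V(\G)$ is fixed, the sets $X_z := N_\G(z, X)$ for $z \in V(\G) \setminus X$ are mutually independent $p$-random subsets of $X$ and independent of $\G|_{X \cup Y}$. So I can treat the problem as: fix any $(X, Y, G|_{X \cup Y})$ with $(X, Y)$ being $(\eps, d, p)$-regular in $G$, and estimate the probability that a $p$-random subset $X_z \subseteq X$ fails to give an $(\eps', d, p)$-regular pair $(X_z, Y)$ in $G$.

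The core single-vertex estimate I will establish is
\[
\Pr\bigl[(X_z, Y) \text{ is not } (\eps', d, p)\text{-regular in } G\bigr] \leq \exp\bigl(-c \, p^2 |X||Y|\bigr)
\]
for some $c = c(\eps', d) > 0$. The bad event demands a witness $(X', Y')$ with $X' \subseteq X_z$, $|X'| \geq \eps' |X_z|$, $|Y'| \geq \eps' |Y|$, and $e_G(X', Y') < (d - \eps')p |X'||Y'|$; choosing $\eps \leq \eps'/2$, the $(\eps, d, p)$-regularity of $(X, Y)$ in $G$ forces $|X'| < \eps |X|$. For each candidate $Y' \subseteq Y$ with $|Y'| \geq \eps'|Y|$, call $x \in X$ \emph{$Y'$-typical} if $\deg_G(x, Y') \geq (d - \eps'/2) p |Y'|$; by Fact~\ref{fac:super-regular}, all but at most $\eps |X|$ vertices of $X$ are $Y'$-typical. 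A Chernoff argument then shows that $X_z$ contains at most $O(\eps p|X|)$ $Y'$-atypical vertices with failure probability $\exp(-\Omega(\eps p|X|))$. A covering argument over a family of canonical $Y'$'s, replacing the naive $2^{|Y|}$ union bound by something much smaller, reduces the regularity inheritance to concentration of $e_G(X_z, Y')$ for canonical $Y'$, which is controlled by Chernoff because $\E[e_G(X_z, Y')] = p \cdot e_G(X, Y') \geq (d-\eps) p^2 |X||Y'| \geq (d-\eps)\eps' p^2 |X||Y|$. The size hypotheses $|X| \geq C p^{-2}$ and $|Y| \geq Cp^{-1}\log(en/|X|)$ are tailored precisely to make this exponential bound sufficient.

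The lemma then follows by standard means. The bad events for distinct $z \notin X \cup Y$ depend on disjoint edge-sets of $\G$ and are therefore mutually independent, with common failure probability $q \leq \exp(-c\, p^2|X||Y|) \leq n^{-\Omega(C)}$ by the size hypotheses. Chernoff yields a bound of $\exp(-10n^2)$ on the probability that more than $Cp^{-1}\log(en/|X|)$ vertices are bad, which absorbs the union bound over the at most $3^n \cdot 2^{n^2}$ possible configurations $(X, Y, G|_{X \cup Y})$. The main obstacle is the covering argument in the single-vertex estimate: the naive union bound over all candidate $Y'$'s is too weak, and one needs a careful stratified net of canonical $Y'$'s combined with the ``typicality'' step above to replace $2^{|Y|}$ by a manageable factor. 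This is the standard but delicate ingredient in Kohayakawa-R\"odl-style inheritance proofs, and its execution is exactly what the tailored lower bounds on $|X|$ and $|Y|$ are designed to support.
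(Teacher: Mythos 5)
First, a remark on scope: the paper does not prove Lemma~\ref{lem:oneRI} at all --- it is quoted verbatim from \cite[Lemma 1.26]{ABH+16} and used as a black box --- so there is no in-paper argument to compare yours against. Judged on its own terms, your sketch points at the right circle of ideas (a Gerke--Kohayakawa--R\"odl--Steger-type hereditary regularity statement combined with concentration of the neighbourhoods $N_\G(z,X)$), but the way you propose to close the argument does not work.

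The fatal step is the final union bound. You claim that the probability that more than $m=Cp^{-1}\log(en/|X|)$ vertices are bad is at most $\exp(-10n^2)$, which would absorb the $3^n\cdot 2^{n^2}$ choices of $(X,Y,G|_{X\cup Y})$. With your single-vertex failure probability $q\le\exp(-c\,p^2|X||Y|)$ and full independence, the best available bound is $\binom{n}{m}q^m\le\exp\bigl(-cCp|X||Y|\log(en/|X|)+m\log n\bigr)$, and $p|X||Y|\log(en/|X|)=o(n^2)$ whenever $p=o(1)$, so this cannot beat $2^{n^2}$; it cannot even beat the $4^n$ choices of $(X,Y)$ when $|X|=\Theta(n)$ and $\log(en/|X|)=O(1)$. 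Moreover, the independence you invoke is not available in the form you need: the subgraph $G\subseteq\G$ is quantified \emph{inside} the w.h.p.\ statement, i.e.\ chosen adversarially after $\G$ is exposed, so $G|_{X\cup Y}$ --- and hence the event ``$(N_\G(z,X),Y)$ is not $(\eps',d,p)$-regular in $G$'' --- may depend on exactly the edges between $z$ and $X$ that determine $N_\G(z,X)$. This is precisely why the known proofs do not union-bound over $G$ and $Y$ at all: one first proves a \emph{deterministic} counting lemma (GKRS) asserting that in \emph{every} graph in which $(X,Y)$ is $(\eps,d,p)$-lower-regular, at most a $\beta^s$-fraction of the $s$-subsets $S\subseteq X$ (for $s\ge Cp^{-1}$) yield a non-$(\eps',d,p)$-regular pair $(S,Y)$; the probabilistic work then only concerns the sizes of the sets $N_\G(z,X)$ and a union bound over $X$ and over $m$-sets of candidate bad vertices, where the correct bookkeeping $\beta^{sm}$ with $s\approx p|X|$ gives an exponent of order $|X|\log(en/|X|)$, enough to beat $\binom{n}{|X|}\binom{n}{m}$. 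Finally, the ``covering argument over canonical $Y'$'' that you flag as the delicate ingredient is exactly the content of that deterministic lemma, and you leave it unexecuted; since $|Y|$ may be of order $n$ while your per-vertex concentration is only $\exp(-\Omega(\eps p|X|))$, it is not clear that the net you describe can be made small enough. As it stands the proposal is a plan rather than a proof, and its quantitative skeleton would need to be replaced by the GKRS route.
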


The setting in which the blow-up lemma works is as follows. 
Let $G$ and $H$ be two graphs on the same number of vertices.
Let $\cV=\{V_i\}_{i\in[t]}$ and $\cX=\{X_i\}_{i\in[t]}$ be partitions of $V(G)$ and $V(H)$, respectively. 
We call the parts~$V_i$ of~$G$ \emph{clusters}.
We say that~$\cV$ and~$\cX$ are \emph{size-compatible} if $|V_i|=|X_i|$ for all
$i\in[t]$. 
Moreover, for $\kappa\geq 1$, we say that $\cV$ is
\emph{$\kappa$-balanced} if there exists $m\in\mathbb{N}$ such that we have
$m\leq |V_i|\leq \kappa m$ for all $i \in[t]$.
Our goal is to embed~$H$ into~$G$ respecting these partitions.

We will have two reduced graphs~$R$ and~$R'\subseteq R$,
where~$R'$ represents super-regular pairs and~$R$ regular pairs. More
precisely, we require the following properties of~$R$ and~$R'$ and the
partitions~$\cV$ and~$\cX$ of~$G$ and~$H$.

\begin{definition}[Reduced graphs and one-sided inheritance]
	\label{def:RGHpartition}
	Let~$R$ and~$R'$ be graphs on vertex set $[t]$.
	\begin{itemize}
		\item $(H,\cX)$ is an \emph{$R$-partition} if $H[X]$ is empty for all $X \in \cX$ and $ij \in E(R)$ whenever $e_H(X_i,X_j) >0$.
		\item $(G,\cV)$ is an \emph{$(\eps,d,p)$-regular $R$-partition} if $(G,\cV)$ is an $R$-partition and the pair $(V_i,V_j)$ is $(\eps,d,p)$-lower-regular for all $ij\in E(R)$.
	\end{itemize}
	In this case we also say that~$R$ is a \emph{reduced graph} of the partition~$\cV$.
	\begin{itemize}
		\item $(G,\cV)$ is \emph{$(\eps,d,p)$-super-regular on $R'$} if the pair $(V_i,V_j)$ is $(\eps,d,p)$-super-regular for all $ij\in E(\nc{R}{R'})$.
	\end{itemize}
	Suppose now that $(G,\cV)$ is an $(\eps,d,p)$-regular $R'$-partition.
	\begin{itemize}
		\item $(G,\cV)$ has \emph{one-sided inheritance} on $R'$ 
		if $(V_i,V_j,V_k)$ has one-sided $(\eps,d,p)$-inheritance
		for all $ij, jk\in E(R')$.
	\end{itemize}
\end{definition}

Next we define define the so-called ``buffer sets'' of vertices in~$H$.
\nc{}{The purpose of these buffer sets in the context of the blow-up lemma is to ensure that $H$, the graph which we intend to embed, is sufficiently dense in the pairs corresponding to~$R'$.}
Note that we restrict ourselves to the case when $H$ is triangle-free.
\begin{definition}[Buffer sets]\label{def:buffer-set}
	Let $H$ be a triangle-free graph with vertex partition $\cX=\{X_i\}_{i\in[t]}$.
	Let $R'\subseteq R$ be graphs on vertex set $[t]$.
	Suppose that $(H,\cX)$ is an $R$-partition.
	We say the family $\tcX=\{\tX_i\}_{i\in[t]}$ of subsets $\tX_i\subseteq X_i$ is an \emph{$(\alpha,R')$-buffer} for $H$ if
	\begin{itemize}
		\item $|\tX_i|\ge\alpha |X_i|$ for all $i\in[t]$  and 
		\item for each $i\in[t]$ and each $x\in\tX_i$, the first and second
		neighbourhoods of $x$ \emph{go along $R'$}, that is,
		for each $xy,yz\in E(H)$ with $y\in X_j$ and $z\in X_k$, we have $ij, jk\in E(R')$.
	\end{itemize}
\end{definition}

Now we are ready to state the blow-up lemma.
{Note that the general version of Lemma~\ref{lem:blow-up} applies also to graphs $H$ that contain triangles.}
\begin{lemma}[{Blow-up lemma for $\Gnp$~\cite[Lemma 1.21]{ABH+16}}]\label{lem:blow-up}
	For all $t_1, \Delta\ge 2$, $\Delta_{R'}$, $\alpha, d>0$, $\kappa>1$
	there exist $\eps = \eps_{\ref{lem:blow-up}}(\Delta, \Delta_{R'}, \alpha, d, \kappa )>0$ and a constant $C = C_{\ref{lem:blow-up}}(\Delta, \Delta_{R'}, \alpha, d, \kappa , t_1)$ such that, for $p>C({\log n}/{n})^{1/\Delta}$, w.h.p. the random graph $\G \sim \Gnp$ 	satisfies the following.
	
	Let $R$ be a graph on $t\le t_1$ vertices and let $R'\subseteq R$ be a spanning
	subgraph with $\Delta(R')\leq \Delta_{R'}$.
	Let $H$ and $G\subseteq \G$ be graphs with $\kappa$-balanced size-compatible vertex partitions $\cX=\{X_i\}_{i\in[t]}$ and $\cV=\{V_i\}_{i\in[t]}$, respectively, which have
	parts of size at least $m\ge n/(\kappa t_1)$.
	Let $\tcX=\{\tX_i\}_{i\in[t]}$ be a family of subsets of $V(H)$.
	Suppose that
	\begin{enumerate}[\upshape(i)]
		\item \label{itm:blow-up-triangle-free} $H$ is triangle-free;
		\item \label{itm:blow-up-buffer} $\Delta(H)\leq \Delta$, $(H,\cX)$ is an $R$-partition,  $\tcX$ is an $(\alpha,R')$-buffer for $H$; and 
		\item \label{itm:blow-up-reduced-graph} $(G,\cV)$ is an $(\eps,d,p)$-regular $R$-partition, which
		is $(\eps,d,p)$-super-regular on~$R'$ and
		has one-sided $(\eps,d,p)$-inheritance on~$R'$.
	\end{enumerate}
	Then there is a graph embedding $\psi\colon V(H) \to V(G)$ such that $\psi(X_i) =  V_i$ for each $i \in [t]$ and $\psi (v) \psi(v') \in E(G)$ for each $vv' \in E(H)$.
\end{lemma}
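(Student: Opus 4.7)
My approach would follow the random greedy embedding strategy that is standard in proofs of blow-up lemmas, adapted to the sparse random setting. The plan is to process the vertices of $H$ in a carefully chosen order, embedding each vertex into a uniformly random element of its current \emph{candidate set}, while reserving the buffer vertices $\tX_i$ for a matching-based endgame.

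Concretely, I would first fix an ordering of $V(H) \setminus \bigcup_i \tX_i$ so that each vertex is processed only after most vertices that will constrain it are already embedded. For each vertex $x \in X_i$ being embedded, the candidate set $C(x) \subseteq V_i$ consists of currently unused $v \in V_i$ that lie in $N_G(\psi(y))$ for every already-embedded neighbour $y$ of $x$; I would embed $x$ to a uniformly random element of $C(x)$. The key invariant to maintain is that throughout the algorithm, for every unembedded vertex $x$ with $s$ already-embedded neighbours, $|C(x)|$ is at least roughly $(d/2)^s p^s m$, and analogously for candidate sets further restricted to $N_\G(v, V_i)$ that are used when looking one step into the future. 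To maintain this invariant, one uses $(\eps,d,p)$-regularity of pairs along $E(R)$ to show that a random embedding choice preserves candidate sizes in expectation, and a Chernoff- or Azuma-type concentration bound to get a high-probability version across the $O(n)$ embedding steps.

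The buffer vertices, together with super-regularity and one-sided inheritance on $R'$, are what make the endgame succeed. By the buffer property, every $x \in \tX_i$ has both its first and its second neighbourhoods going along $R'$. Hence I can use the super-regular \emph{minimum} degree condition (rather than the weaker average regularity) to keep $|C(x)|$ linear in $m$ even after all of $x$'s embedded neighbours have acted on it, and I can use one-sided inheritance to guarantee that the restrictions of candidate sets of second-neighbours to $N_\G(\psi(y), V_k)$ remain $(\eps',d,p)$-regular. After the random greedy stage embeds everything outside the buffers, I would, for each cluster $i$, form the bipartite \emph{candidacy graph} between the unembedded $\tX_i$ and the remaining vertices of $V_i$, verify Hall's condition using super-regularity and inheritance, and extract a perfect matching; extending $\psi$ by these matchings simultaneously across all $i$ is consistent because buffers only interact with parts joined by $R'$-edges, which is exactly where the invariants remain valid.

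The main obstacle will be controlling candidate set sizes in the sparse regime $p \gtrsim (\log n/n)^{1/\Delta}$. Unlike the dense case, candidate sets can be as small as $\Theta(p^{\Delta} n) = \Theta(\log n)$, so even mild deviation from expectation is catastrophic. This forces a delicate concentration argument in which one must continually ``re-regularise'' candidate sets after each embedding step via inheritance, and in which one must keep track of exceptional vertices whose candidate sets are atypical. Lemma~\ref{lem:oneRI} is essential here: it guarantees that only $O(p^{-1}\log n)$ vertices of $\G$ fail to inherit regularity on a given neighbourhood, and these exceptional vertices can be embedded early (or absorbed into the buffers) so that they do not disturb later steps. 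Combining the inheritance lemma, the super-regular minimum degree on $R'$, and concentration across the embedding process is the technical heart of the proof.
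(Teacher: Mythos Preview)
The paper does not prove this lemma at all: it is quoted verbatim as a black box from \cite[Lemma~1.21]{ABH+16} and then simply applied in Section~\ref{sec:6}. So there is no ``paper's own proof'' to compare your proposal against; your task here is only to cite the result, not to reprove it.

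That said, your sketch is broadly in the spirit of the actual proof in \cite{ABH+16}: a random greedy algorithm on the non-buffer vertices, followed by a matching-based completion on the buffer using super-regularity and inheritance on~$R'$. If you genuinely wanted to reprove the sparse blow-up lemma, be aware that the concentration step you describe is substantially harder than you indicate. Candidate sets of size $\Theta(p^{\Delta}n)=\Theta(\log n)$ are too small for a naive Azuma argument across $\Theta(n)$ steps, and the actual proof in \cite{ABH+16} uses a more elaborate machinery (a queue for ``bad'' vertices, careful bookkeeping of image restrictions, and a two-sided inheritance lemma in the general case) rather than a single concentration inequality. For the purposes of the present paper, however, none of this is needed: just invoke the lemma.
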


\subsection{Preliminaries}
The next lemma allows us to assume that the reduced graph consists of a bounded number of monochromatic components by reducing its minimum degree.

\begin{lemma}\label{lem:choose-components}
	Let $\delta \ge \gamma> 0 $ and $G$ \nc{}{be} an $r$-edge-colour\nc{ing of}{ed} graph on $t$ vertices with $\delta(\nc{R}{G}) \geq \delta t$.
	Then $G$ contains a spanning subgraph~$\nc{H}{R}$ with $\delta(R) \geq (\delta- \gamma) t$, which is the union of at most $r^2/\gamma$ monochromatic components.
\end{lemma}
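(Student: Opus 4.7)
The plan is a simple averaging / pruning argument: we will obtain $R$ from $G$ by deleting all edges that lie in ``small'' monochromatic components. For each colour $i \in [r]$, let $\mathcal{C}_i$ denote the collection of maximal connected subgraphs of $G$ all of whose edges have colour $i$, and call a component $C \in \mathcal{C}_i$ \emph{large} if $|V(C)| \geq \gamma t / r$, \emph{small} otherwise. Define $R$ to be the spanning subgraph of $G$ whose edge set is the union of the edge sets of all large monochromatic components (over all colours).

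To bound the number of components, I would argue colour by colour: since large colour-$i$ components are vertex-disjoint subsets of $V(G)$ each of size at least $\gamma t /r$, there are at most $t/(\gamma t / r) = r/\gamma$ of them, and summing over the $r$ colours gives at most $r^2/\gamma$ large monochromatic components in total. By construction $R$ is the union of these components (together with possibly some isolated vertices, which will be absorbed into the adjacent components once we verify the degree bound).

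For the minimum degree, fix any vertex $v \in V(G)$. In each colour $i$, the vertex $v$ belongs to a unique component of $\mathcal{C}_i$; every colour-$i$ edge at $v$ in $G$ lies inside this component. If that component is large, none of these edges are deleted; if it is small, then at most $\gamma t / r - 1$ edges are deleted at $v$ in colour $i$. Summing over the $r$ colours, $v$ loses at most $r \cdot \gamma t /r = \gamma t$ edges when passing from $G$ to $R$, hence $\deg_R(v) \geq \deg_G(v) - \gamma t \geq (\delta - \gamma) t$.

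There is really no serious obstacle here: the argument is a routine double-counting, and the only thing to be slightly careful about is the precise threshold $\gamma t / r$ (chosen so that the loss per colour is $\gamma t / r$ and the number of large components per colour is $r/\gamma$, giving the right balance between the two conclusions). No probabilistic input or structural lemma from earlier in the paper is needed.
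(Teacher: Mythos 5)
Your proof is correct and takes essentially the same approach as the paper's: delete edges in monochromatic components of fewer than $\gamma t/r$ vertices, count the surviving components per colour by disjointness, and bound the degree loss at each vertex by $\gamma t/r$ per colour. The paper is slightly less explicit about the (at most one, per colour) small component containing $v$ and about the isolated-vertex boundary case, but the argument is the same.
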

\begin{proof}
	Define $R$ to be the union of all monochromatic components of~$G$ of size at least $\gamma t/r$.
	Note that, for each colour, there are at most $r/\gamma$ such components.
	Hence $R$ consists of at most $r^2/\gamma$ monochromatic components.
	Since each vertex~$v$ is in at most $r$ monochromatic components and each component not in~$R$ has at most $\gamma t/r$ edges touching~$v$, we have $\deg_{R}(v) \ge \delta(G) - r \cdot \gamma t/r$.
	In particular, $R$ spans~$G$.
\end{proof}

Let $R'$ be a perfect matching in~$R$.
Suppose that $(\G, \cV)$ is a regular $R$-partition and a super regular $R'$-partition.
Recall that we aim to find a small monochromatic cycle partition of~$\G$ using Lemma~\ref{lem:blow-up}.
The following lemma will enable us to define the graph~$H$ needed in the hypothesis of Lemma~\ref{lem:blow-up}.
Since $R'$ is a (perfect) matching and $H$ consists of disjoint cycles, verifying the existence of an $(\alpha,R')$-buffer for~$H$ reduces to showing that, for each $ij \in E(R')$, there is a long path in $H[X_i,X_j]$.

\begin{lemma}\label{lem:cycle-allocation}
	Let $s,t,m \in \mathbb{N}$ with $m \geq 90t^3s$ and $t$ even.  
	Let $R$ be a graph on $[t]$ with $\delta(R) \geq 2t/3$ and $R' \subseteq R$ be a perfect matching.
	Suppose that $R$ is the union of edge-disjoint connected subgraphs $R_1,\ldots,R_s$.
	Let $\{x_i\}_{i \in [t]}$ be a family of integers satisfying $m \leq x_i \leq 10m/9$.
	Then there is a graph $H$, a partition $\cX =\{X_i\}_{i \in [t]}$ of $V(H)$ and a family $\tilde\cX = \{\tilde X_i \}_{i \in [t]}$ of subsets of $V(H)$ with the following properties.
	\begin{enumerate}[label={\rm (\roman*)}]
		\item \label{itm:cycle-allocation1}$|X_i| = x_i$ for each $i \in [t]$;
		\item \label{itm:cycle-allocation2}$H$ is triangle-free;
		\item \label{itm:cycle-allocation3} $\tilde\cX$ is a\nc{n}{} $(1/50,R')$-buffer for $H$; and 
		\item \label{itm:cycle-allocation4}
		$H$ is the union of vertex-disjoint cycles $C_1,\ldots,C_s$ and at most one isolated vertex such that $(C_k,\cX)$ is an $R_k$-partition for each $k \in [s]$. 
		In particular, $(H,\cX)$ is an $R$-partition and $\Delta(H) \le 2$. 
	\end{enumerate}
\end{lemma}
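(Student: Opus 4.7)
The plan is to realise each cycle~$C_k$ as (the vertex sequence of) a closed walk in~$R_k$, with each cluster-visit allocated to a fresh vertex of the appropriate~$X_i$. The walk will consist mostly of long bipartite blocks alternating along the matching edges in $M_k := R' \cap E(R_k)$, joined by short traversals of the remaining edges of a spanning tree of~$R_k$; the interior vertices of these blocks will supply the buffer set~$\tilde X_i$.

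To build the walks, fix a spanning tree $T_k \supseteq M_k$ of $R_k$ (possible because the matching~$M_k$ is a forest), and form the multigraph on~$V(R_k)$ obtained by doubling each non-matching tree edge and taking each matching edge $e \in M_k$ with multiplicity $2 N_e$ for integers $N_e \geq 1$ to be chosen. This multigraph is connected and has only even degrees, hence admits an Euler circuit $W_k$; one can arrange the circuit so that the $2 N_e$ copies of each $e$ are traversed consecutively, producing the desired bipartite block. A count of degrees shows that $W_k$ visits cluster $v$ exactly $\nu_k(v) := \deg_{T_k \setminus M_k}(v) + N_{e(v)} \mathbf{1}[k = k(v)]$ times, where $e(v) \in R'$ is the matching edge incident to~$v$ and $k(v)$ is the unique index with $e(v) \in E(R_{k(v)})$. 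Reading off the visits of $W_k$ as distinct vertices of the clusters yields the cycle~$C_k$.

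The main obstacle is to choose the $N_e$'s so that $\sum_k \nu_k(v) = x_v$ holds for every $v \in [t]$ (up to a single leftover vertex declared isolated). For each matching edge $e = \{v, m(v)\}$, the two endpoints impose targets $N_e = x_v - \sum_{k : v \in V(R_k)} \deg_{T_k \setminus M_k}(v)$ and the analogous expression for $m(v)$, which need not agree. I would set $N_e$ equal to the smaller of the two and absorb the residual imbalance by appending a bounded number of ``correction bounces'' on non-matching edges of $R_{k(v)}$ incident to the other endpoint, cascading the compensation through neighbouring clusters as needed. Because $\deg_{T_k}(v) \leq t$ and $v$ lies in at most $s$ of the subgraphs $R_k$, both the base contribution and the total correction at any cluster are $O(ts)$, which is dwarfed by $x_v \geq m \geq 90 t^3 s$; any final unit of discrepancy is soaked up by the allowed isolated vertex.

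For the remaining properties: \ref{itm:cycle-allocation1} and \ref{itm:cycle-allocation4} hold by construction, and $\Delta(H) \leq 2$ is automatic; triangle-freeness \ref{itm:cycle-allocation2} follows because each $C_k$ is a simple cycle of length at least $N_e \geq x_v/2 \geq 4$. For the buffer condition \ref{itm:cycle-allocation3}, take $\tilde X_i$ to consist of the $X_i$-vertices in the interior of the bipartite block on edge $e(i)$ inside $C_{k(i)}$, discarding a bounded number at each end of the block. For any such $x$, both cycle-neighbours of $x$ lie in $X_{m(i)}$ and their other cycle-neighbours lie back in $X_i$, so both the first and second neighbourhoods of $x$ go along the matching edge $e(i) \in R'$. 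Since $N_{e(i)} \geq x_i - O(ts)$, this produces $|\tilde X_i| \geq x_i - O(ts) \geq x_i/50$, using the hypothesis $x_i \geq 90 t^3 s$.
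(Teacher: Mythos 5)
Your construction takes a genuinely different route. You build each cycle~$C_k$ as the vertex sequence of an Euler circuit in a carefully weighted multigraph on~$V(R_k)$, choosing the multiplicities $N_e$ of the matching edges to hit the targets $x_i$, then patching the mismatch with ``correction bounces.'' The paper avoids the need for any such tuning: it first constructs a tiny skeleton $C_k'$ inside each $R_k$ (of size $O(t^3)+O(m)$, visiting every edge of $R_k$ and carrying a long alternating path $P_{ij}$ on each $R'$-edge), removes $\bigcup_k V(C_k')$, observes that the leftover graph $G'$ has $\delta(G') > |V(G')|/2$ thanks to $\delta(R) \geq 2t/3$ and $x_i \leq 10m/9$, hence contains a near-perfect matching $M$ by Dirac's theorem, and inserts each edge of $M$ into the path $P_{ij}$ of the unique $C_k'$ with $ij \in E(R_k)$. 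The matching handles the vertex allocation automatically, covering every vertex but at most one.

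That is exactly the step where your argument has a gap. After setting $N_e$ to the smaller of the two endpoint targets, you are left with deficits $d_v \geq 0$ that you propose to cancel with bounces on non-$R'$ edges, ``cascading the compensation as needed.'' Two problems. First, the deficits are not $O(ts)$ as you claim: since $|x_v - x_{m(v)}|$ may be as large as $m/9$, they are $\Theta(m)$. (This alone does not kill the argument, since the bipartite block on $e(v)$ still contributes at least $m-O(t)$ vertices to $X_v$, more than enough for a $1/50$-buffer, but the justification you wrote is not correct.) Second, and more seriously, realising a prescribed nonnegative degree sequence $(d_v)$ with $\sum_v d_v$ even by a multiset of edges from $R \setminus R'$ is a genuine feasibility problem, and your greedy recipe does not obviously solve it. For instance, if $d_v=2$ at a single vertex and $0$ elsewhere, no loopless correction multigraph realises this degree sequence, and the one permitted isolated vertex can absorb only a single unit. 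A correct version would need to choose the $N_e$'s and the bounces jointly (say via a flow or $T$-join argument in $R\setminus R'$, using that this graph is connected and non-bipartite), which is a substantial missing piece; the paper's Dirac-matching argument sidesteps the entire issue.
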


\begin{proof}
	Let $\cX =\{X_i\}_{i \in [t]}$ be a family of disjoint vertex sets satisfying $|X_i| = x_i$.
	For each $k \in [s]$, let $G_k$ be the graph on $\bigcup_{i \in V(\nc{G_k}{R_k})} X_i$ which contains an edge $vw$ precisely when $v \in X_i$ and $w \in X_j$ for some edge $ij \in E(R_k)$.
	\nc{}{So $G_k$ is complete between $X_i$ and $X_j$ for every edge $ij$ of $R_k$.}
	\nc{Thus}{It follows that} $G \coloneqq \bigcup_{k \in [s]} G_k$ is a union of edge-disjoint graphs and $(G,\cX)$ is an $R$-partition.
	
	First, we claim that there exists \nc{}{a collection of} disjoint cycles $\cC'=\{C'_1, \dots, C'_s\}$ in~$G$ such that, for each $k \in [s]$, 
	\begin{itemize}
		\item $C_k' \subseteq G_k$ and $C_k'$ is triangle-free;
		\item for each edge $ij \in E(R_k)$, there is a path $P_{ij} \subseteq C'_k$ which alternates between $X_i$ and $X_j$ and has order 
		\begin{align*}
		|V(P_{ij})| \ge \begin{cases}
		4m/45  \ge   ( |X_i|+ |X_j| ) / 25+4& \text{if $ij \in E(R')$,} \\
		4 & \text{otherwise;}
		\end{cases}
		\end{align*}		
		\item $| V(\cC')| \le 2 t^3 s + 2 t m/45 \le tm /15$.
	\end{itemize}
	To see that such cycles exist, note that, by the connectivity of $R_k$, any two vertices of $G_k$ are connected by a path of order at most $t$.
	Thus at most $t^3$ vertices are needed for a cycle $C_k'$ in $G_k$ to ``visit'' all edges of $R_k$, that is, \nc{}{for} the cycle $C_k'$ \nc{contains}{to contain} an edge between $X_i$ and $X_j$ for each $ij \in E(R_k)$.	
	Moreover, by replacing each edge of $C_k'$ with a path of length~3 (between the same clusters), we can guarantee triangle-freeness.
	Thus for each $k \in [s]$ and $ij \in E(R_k)$, $C_k'$ contains a path $P_{ij}$ of length~$3$ alternating between $X_i$ and~$X_j$ and $| V(\cC')| \le 2 t^3 s$.
	Since $|X_i| \geq m \geq 2t^3s+2m/45$ and $R'$ is a perfect matching, we further extend\nc{s}{} each $P_{ij}$ with $ij \in E(R')$ to have length at least $4m /45$.
	Thus our claim\nc{s}{} holds. 
	
	Let $G' = G - \bigcup_{k \in [s]} V(C'_k)$, so $|V(G')| \geq 14tm/15$.
	Recall that $\delta(R) \ge 2t/3$ and $|X_i| \le 10 m /9$. 
	Hence 
	\begin{align*}
			\delta(G') & \geq |V(G')| - t/3 \cdot 10m/9 \ge |V(G')|/2.
	\end{align*}
	By Dirac's theorem, $G'$ contains a matching~$M$, which misses at most 1 vertex. 
	For each $k \in  [s]$, we obtain a cycle~$C_k$ from $C'_k$ by extending each path $P_{ij}$ (with $ij \in E(R_k)$) to include all edges of~$M$ between $X_i$ and~$X_j$.
	To be precise, if the edges in~$M$ between $X_i$ and $X_j$ are $x_{i,1} x_{j,1},~ x_{i,2} x_{j,2},~\dots,~x_{i,p} x_{j,p}$ with $x_{i,q} \in X_i$ and $x_{j,q} \in X_j$, then we attach the path $x_{i,1} x_{j,1} x_{i,2} x_{j,2}\dots x_{i,p} x_{j,p}$ to either the start or end of~$P_{ij}$ (and define $C_k$ accordingly).

	Let $H$ be the graph on $V(G)$ with $E(H) =  \nc{E(}{}\bigcup_{k \in [s]} \nc{}{E(}C_k)$.
	By our construction, \ref{itm:cycle-allocation1}, \ref{itm:cycle-allocation2} and \ref{itm:cycle-allocation4} hold.
	Recall \nc{}{that} for each $ij \in E(R')$, there exists a \nc{}{path} $P_{ij}$ in~$H$ alternating between $X_i$ and $X_j$ with $|V(P_{ij})| \geq (|X_i|+|X_j|)/{25}+4$.
	Let $P'_{ij}$ be the path obtained from $P_{ij}$, by deleting the two vertices on each end of~$P_{ij}$.
	It follows that  ${|V(P'_{ij})| \geq (|X_i|+|X_j|)/{25}}$.
	Moreover, the first and second neighbourhoods of each vertex $v \in V(P'_{ij}) \cap X_i$ are contained in $X_i$ and $X_j$, respectively, and vice versa.
	Hence we can choose a {$(1/50,R')$-buffer} $\tilde\cX = \{\tilde X_i \}_{i \in [t]}$ for~$H$, namely $\tilde{X}_i = V(P'_{ij}) \cap X_i$ for each $ij$ in~$R'$.
	\end{proof}

\subsection{Proof of Lemma~\ref{lem:approximate-partition}}

\nc{}{This subsection is dedicated to the proof of Lemma~\ref{lem:approximate-partition}.}
\nc{}{We begin by setting up the following constants.}
Let
\begin{align*}
	\eps_{\ref{lem:blow-up}} & = \min\{ 2^{-8}, \eps_{\ref{lem:blow-up}}(2, 1, 1/50, 1/{r}, 10/{9} ) \}, &
	\eps_{\ref{lem:oneRI}} & =  \eps_{\ref{lem:oneRI}}(\eps_{\ref{lem:blow-up}}/{2},1/r), \\
	C_{\ref{lem:oneRI}} & = C_{\ref{lem:oneRI}}({\eps_{\ref{lem:blow-up}}}/{2},1/r), &
	\eps &=  \min\{ 2^{-10}, \eps_1^{2}, {\eps_{\ref{lem:blow-up}}}, \eps_{\ref{lem:oneRI}} \} /4,\\
	t_0 &= 1/ \eps^2, &
	\eta &= \eta_{\ref{lem:sparse-regularity}}(2,\eps^2,r,t_0),\\
	t_1 &= \max \{ t_{\ref{lem:sparse-regularity}}(2,\eps^2,r, t_0 ) , 1/ (2 \eta)\},&
	C& = C_{\ref{lem:blow-up}}(2, 1, {1}/{50}, {1}/{r}, {10}/{9}  , t_1),
\end{align*}
where the functions $\eps_{\ref{lem:blow-up}}, C_{\ref{lem:blow-up}}$ are given by Lemma~\ref{lem:blow-up}; $\eps_{\ref{lem:oneRI}}, C_{\ref{lem:oneRI}}$ by Lemma~\ref{lem:oneRI}; $\eta_{\ref{lem:sparse-regularity}},t_{\ref{lem:sparse-regularity}}$ by Lemma~\ref{lem:sparse-regularity}.

\noindent\textbf{Properties of the random graph.}
Let $p=p(n) \geq  C(\log n /n)^{1/(r+1)}$.
Let $\G \sim \Gnp$.
Note that w.h.p. $\G$ satisfies the conclusions of Lemmas~\ref{lem:oneRI} and~\ref{lem:blow-up}. \nc{(as $\Delta = 2$)}{} 
\nc{}{(In particular the conditions on the each of the $p$'s hold as $r+1 \geq 2 = \Delta$.)}
By Lemma~\ref{lem:densityXY} with $\eps/2, 1/(2t_1)$ playing the roles of $\alpha, \beta$, w.h.p. $\G$ satisfies the following property 
\begin{align}\label{equ:XY-density}
\begin{minipage}[c]{0.8\textwidth}
$e_\G(X,Y)  = (1 \pm \eps/2) p |X| |Y|$ for all disjoint $X,Y\subseteq V(\G)$ with $|X| \geq 48t_1/(\eps^2p)$ and $|Y| \ge  n/(2t_1)$.
\end{minipage}\ignorespacesafterend 
\end{align}
We will derive the lemma from these properties.
\medskip

\noindent\textbf{Defining regular partition and reduced graph.}
Consider any $r$-edge-colouring of~$\G$ with colours~$[r]$.
By~\eqref{equ:XY-density}, any disjoint sets $X,Y \subseteq V(\G)$ of cardinality at least $\eta n \geq n/(2t_1)$ satisfy $e_\G(X,Y) \leq 2 p |X| |Y|$.
This implies that $\G$ is $(\eta,\nc{D}{2})$-upper-uniform with density $p$.
Therefore Lemma~\ref{lem:sparse-regularity} (with $\eps^2$ playing the role of $\eps$) guarantees a vertex partition $\cV= \{V_i\}_{i \in [t']}$ of $V(\G)$ such that:
\begin{enumerate}[label={(a$_{\arabic*}$)}]
	\item $t_0 \le t' \le t_1$;
	\item $||V_i|-|V_j|| \leq 1$ for all $i,j \in [t']$;
	\item \label{itm:regular-pairs} all but at  most $\eps^2 t'^2$ pairs $(V_i,V_j)$ are $(\eps^2,d,p)${-fully-regular} in each of the $r$ colours for some possibly different $d$.
\end{enumerate}

We define a reduced graph~$T'$ on~$[t']$ such that $ij$ is an edge in~$T'$ if and only if $(V_i,V_j)$ are $(\eps^2,d,p)${-fully-regular} in each of the $r$ colours for some possibly different~$d$. 
So $T'$ contains all but $\eps^2 t'^2$ edges by~\ref{itm:regular-pairs}. 
By deleting all vertices with degree less than $(1-\eps)t'$ (and possibly one vertex more), we obtain a subgraph~$T$ of~$T'$ on some even $t \ge (1-2\eps)t'$ vertices with $\delta(T') \geq (1 - 3\eps)t$.
Without loss of generality, we can assume that $V(T) = [t]$.

For a colour $c \in [r]$, we denote by $\G_{c}$ the \nc{spanning }{}subgraph of $\G$, which contains all edges of colour $c$.
We now edge-colour~$T$ as follows. 
Consider an edge~$ij$ of~$T$. 
By~\eqref{equ:XY-density}, $e_\G(V_i,V_j) \geq (1-\eps/2)p|V_i||V_j|$.
Hence there exists some colour $c \in [r]$ such that $e_{\G_c}(V_i,V_j) \geq (1/r-\eps/2)p|V_i||V_j|$.
Moreover, $(V_i,V_j)$ is an $(\eps,1/r,p)${-lower-regular} pair in $\G_c$ as $\eps \geq \eps/2 +\eps^2$ and $ij \in E(T)$.
We colour the edge $ij$ (of~$T$) with such a colour~$c$ (if there is more than one such colour, then we choose one arbitrarily). 

We apply Lemma~\ref{lem:choose-components} with $\gamma =1/4$ to obtain a spanning subgraph~$R$ of~$T$ with $\delta(R) \geq 2 t/3$ and which is the union of at most $4r^2$ monochromatic components.
Since $t$ is even, $R$ contains a perfect matching~$R'$.

Define $G'$ to be the spanning subgraph of $\G[\bigcup_{i \in [t]} V_i]$ obtained by  keeping an edge $vw$ precisely when there is an edge $ij$ in~$R$ such that $v \in V_i$, $w\in V_j$ and $vw$ has the same colour as $ij$.
\medskip

\noindent\textbf{Defining $W$.}
The set $W$ will consist of four vertex sets $\bigcup_{i \notin [t]} V_i$, $\Vexc$, $\Vdeg$ and $\Vinh$.
 Each of these four sets presents an obstacle to the application of Lemma~\ref{lem:blow-up}.
	The vertices $i \notin [t]$ are not covered by $R$ and hence the set $\bigcup_{i \notin [t]} V_i$ lies outside of the scope of Lemma~\ref{lem:blow-up}.
	The set $\Vexc$ will contain the vertices which do not behave typically with respect to the regularity of the pairs of $R'$.
	The set $\Vdeg$ will contain the vertices which do not behave typically with respect to the random graph $\G$ and the partition $\cV$.
The removal of $\Vexc \cup \Vdeg$ \nc{will ensure}{then ensures} that $(G, \cV)$ is super-regular on~$R'$.
Finally, the set~$\Vinh$ contains the vertices that do not inherit regularity with respect to~$R'$.
We now formally define $\Vexc$, $\Vdeg$ and $\Vinh$.

First we define~$\Vexc$.
For each $i \in [t]$ with $ij \in E(R')$, \nc{define}{set}
\begin{align*}
	\Vexc_{i} & = \{v \in V_i \colon\  \deg_{G'}(v,V_j)  < (1/r-\eps) p |V_j| \},
\end{align*}
that is, $\Vexc_{i} $ consists of all vertices in $V_i$ that have less than the expected number of neighbours in~$V_j$ in~$G'$.
Note that 
\begin{align*}
|\Vexc_i| \le \eps |V_i|
\end{align*}
by Fact~\ref{fac:super-regular}.
Let $\Vexc = \bigcup_{i \in [t]} \Vexc_i$ and $V'_i = V_i \setminus \Vexc_i$ for all $i \in [t]$. 

Next, we define $\Vdeg$.
For each $i \in [t]$, let $\Vdeg_i$ be the set of vertices $v \in V(\G) \sm V'_i$ with $\deg_\G(v,V_i') \neq (1 \pm \eps)p|V_i'|$ and set $\Vdeg  = \bigcup_{i\in [t]} \Vdeg_i$.
\nc{}{So~\eqref{equ:XY-density} together with the fact that $|V_i'| \geq n/(2t_1)$, gives $|\Vdeg_i| \leq 48t_1/(\eps^2p)$.}
Therefore we can bound
\begin{align*}
|\Vdeg | \le 48 t_1^2/(\eps^2 p).
\end{align*}

Consider any $i \in [t]$, and let $ij \in E(R')$.
Lemma~\ref{lem:slice} implies that $(V'_i,V'_j)$ is $(2\eps,1/r,p)$-lower-regular in $G'$.
Since $n$ is large, we have 
\begin{align*}
|V_i'|\ge C_{\ref{lem:oneRI}}\max\big(p^{-2},p^{-1}\log n\big) \quad\text{and}\quad|V_j'|\ge C_{\ref{lem:oneRI}}p^{-1}\log (en/|V_i'|).
\end{align*}
Let $\Vinh_{i}$ be the set of vertices $z\in V(\G)$ such that the pair $\big(N_\G(z,V_i'),V_j'\big)$ is not $( \eps_{\ref{lem:blow-up}}/2 , 1/r , p )$-lower-regular in $G'$ and set
$\Vinh = \bigcup_{i \in [t] }\Vinh_{i}$.
\nc{Since $2 \eps < \eps_{\ref{lem:oneRI}}$, Lemma~\ref{lem:oneRI} implies that $|\Vinh_i| \le  C_{\ref{lem:oneRI}} p^{-1}\log(2et') $ for each $i \in [t]$ and so }{Recall our choice of $\eps_{\ref{lem:oneRI}}$, $C_{\ref{lem:oneRI}}$ and note that $2 \eps < \eps_{\ref{lem:oneRI}}$.
We apply Lemma~\ref{lem:oneRI} with $2\eps$, $\eps_{\ref{lem:blow-up}}/{2}$ and $1/r$ playing the roles of $\eps$, $\eps'$ and $d$, respectively.
This yields that $|\Vinh_i| \le  C_{\ref{lem:oneRI}} p^{-1}\log(2et') $ for each $i \in [t]$. It follows that}
\begin{align*}
	|\Vinh|\leq C_{\ref{lem:oneRI}} t p^{-1} \log( 2 et') .
\end{align*}

Finally, set $W = \bigcup_{i \notin [t]} V_i \cup \Vexc \cup \Vdeg \cup \Vinh$.
In summary, we have, for each $i \in [t]$ with $ij \in E(R')$,
\begin{enumerate}[label={(b$_{\arabic*}$)}]
	\item \label{itm:b1}
	$|W| \le \eps_1 n$ and $|W \cap V_i| \le 3 \eps |V_i| \le 6 \eps |V_i'|$; 
	\item \label{itm:b2}
	for all $v \in V_i'$, $ \deg_{G'} ( v ,V_j) \ge ( 1/r - \eps ) p | V_j| $;
	\item \label{itm:b3}
	for all $u  \notin W \cup V_i' $, $\deg_\G (u,V_i') = (1 \pm \eps)p|V_i'|$; \nc{}{and}
	\item \label{itm:b4}
	for all $z \notin W$, 
	the pair $\big(N_\G(z,V_i'),V_j'\big)$ is $( \eps_{\ref{lem:blow-up}}/2,1/r,p)$-lower-regular in~$G'$.
\end{enumerate}
Since $t \ge (1- \sqrt{\eps})t'$ and $n$ is large, we have $|\bigcup_{i \notin [t]} V_i| \le \sqrt{\eps} t' \lceil n/t' \rceil \le \eps_1 n /2$.
We deduce that \ref{itm:b1} holds.
Note that \ref{itm:b2}, \ref{itm:b3} and \ref{itm:b4} hold as $v \in V_i \setminus \Vexc_i$, $u \notin V'_i \cup \Vdeg $ and $z \notin \Vinh_i$, respectively.
\medskip

\noindent\textbf{Defining $U$.}
\nc{We will pick $U = \bigcup_{ i \in [t]} U_i$ such that for each $i \in [t]$,}{For $i \in [t]$, we pick a random set $U_i$ by selecting each $v \in V_i' \setminus W $ with probability $3 \cdot 2^{-12}$ (i.e. half way between $2^{-11}$ and $2^{-10}$).
A standard application of Lemma~\ref{lem:che} shows that w.h.p. $U \coloneqq \bigcup_{ i \in [t]} U_i$ has the following properties:}
\begin{enumerate}[label={(c$_{\arabic*}$)}]
	\item $U_i \subseteq V_i \setminus W$;
	\item \label{itm:U-sparse} $2^{-11} | V_i '| \le |U \cap V_i'| \leq 2^{-10} |V_i'|$; \nc{}{and}
	\item 
	for each $v \in V(\G) \sm ( W \cup V_i')$, we have 
	\begin{align*}
	\deg_{\G}( v , U  \cap V_i') \leq 2^{-10}\deg_{\G}(v,V_i') \nc{\overset{\text{\ref{itm:b3}}}{\leq} 2^{-9} p |V'_i|}{.}
	\end{align*}
\end{enumerate}
\nc{(Indeed, such $U_i$ exists by selecting each $ v \in V_i \setminus W $ with probability $3\cdot 2^{-12}$ and together with a standard application of Lemma~\ref{lem:che}.)}{Let us fix $U = \bigcup_{ i \in [t]} U_i$ with these properties.} 
Note that $2^{-12}n \le |U| \le 2^{-10} n$.
\nc{}{Moreover, \ref{itm:b3} yields that,
\begin{enumerate}[label={(c$_{\arabic*}'$)}]\setcounter{enumi}{2}
	\item \label{itm:deg-U} for each $v \in V(\G) \sm ( W \cup V_i')$, we have 
	\begin{align*}
	\deg_{\G}( v , U  \cap V_i') \leq  2^{-9} p |V'_i|.
	\end{align*}
\end{enumerate}}

\medskip

\noindent\textbf{Finding \nc{}{a} monochromatic cycle partition.}
We now verify that the lemma holds with our choices of $W$ and~$U$. 
Consider any $U' \subseteq U$ and $U^+ \subseteq V(\G)$ with 
\begin{align}\label{equ:size-W'}
|U^+| \leq  2^{18}r^8/p^r \leq 2^{-10} p |V_i'|.
\end{align}
(Here we used the fact that $|V_i| \geq n/t_1$ and $p \geq (\log n /n)^{1/{(r+1)}}$.)
Let $Q = W \cup U^+ \cup U'$.
To finish the proof, we will show that  $\G - Q$ admits a partition into at most $4r^2+1$ monochromatic cycles.

Let $G = G'-Q$ and define a partition $\cV^*=\{V_i^*\}_{i\in [t]}$ with $V_i^* = V_i \sm Q$ for $i \in [t]$.
By~\ref{itm:b1}, \eqref{equ:size-W'} and~\ref{itm:U-sparse},  we have
\begin{align}\label{equ:V*-V}
|V^*_i| &\geq   |V_i| - |W \cap V_i| - |U \cap V_i| - |U^+| \geq (1-2^{-8})|V_i|\\
\nonumber
 &\ge 10n/11t_1.
\end{align}
\nc{Hence $\cV^*$ is $(10/9)$-balanced.
	So there is some integer $m$ with $m \leq |V^*_i| \leq 10m/9$ for every $i \in [t]$.}{Hence there is some integer $m$ with $m \leq |V^*_i| \leq 10m/9$ for every $i \in [t]$.
	So $\cV^*$ is $(10/9)$-balanced.}

Denote the monochromatic components of~$R$ by $R_1, \ldots, R_{s}$ with $s \leq 4r^2$.
We apply Lemma~\ref{lem:cycle-allocation} with $|V^*_i|$ playing the role\nc{s}{} of $x_i$ \nc{and}{to} obtain a graph~$H$, a partition $\cX =\{X_i\}_{i \in [t]}$ of~$V(H)$ and a family~$\tilde\cX = \{\tilde X_i \}_{i \in [t]}$ of subsets of $V(H)$ with the following properties.
\begin{enumerate}[label={(d$_{\arabic*}$)}]
		\item \label{itm:d1} $\cX$ is size-compatible with $\cV^*$;
		\item \label{itm:d2} $H$ is triangle-free;
		\item \label{itm:d3} $\tilde\cX$ is a\nc{n}{} $(1/50,R')$-buffer for $H$; and 
		\item \label{itm:d4}
		$H$ is the union of vertex-disjoint cycles $C_1,\ldots,C_s$ with at most one isolated vertex such that $(C_k,\cX)$ is an $R_k$-partition for each $k \in [s]$. 
		In particular, $(H,\cX)$ is an $R$-partition and $\Delta(H) \le 2$. 
	\end{enumerate}
To find a monochromatic cycle partition of $G$ into at most $4r^2+1$ cycles, it suffices to show that there exists an embedding~$\psi$ of~$H$ into~$G$ with $\psi(X_i) = V_i^*$ for each $i \in [t]$.
Therefore, it suffices to show that we can apply Lemma~\ref{lem:blow-up} with $2,1,1/50,1/r, 10/9$ playing the roles of $\Delta, \Delta_{R'}, \alpha, d, \kappa$. 
\medskip

\noindent\textbf{Verifying the conditions of the blow-up lemma.}
Note that~\ref{itm:d2}--\ref{itm:d4} implies conditions~\ref{itm:blow-up-triangle-free} and~\ref{itm:blow-up-buffer} of Lemma~\ref{lem:blow-up}.
\nc{Each}{Moreover, each} $V_i^*$ has the desired size by~\eqref{equ:V*-V}.
It remains to show that condition~\ref{itm:blow-up-reduced-graph} of Lemma~\ref{lem:blow-up} is satisfied, which is covered by the following two claims.
Note that for all $i \in [t]$ and $v \in V(G) \setminus V_i$, we have 
	\begin{align}
		\label{equ:pVi*=max}
		\deg_\G(v,V_i^*) 
		\leq \deg_\G(v,V_i')  
		\overset{\text{\ref{itm:b3}}}{\leq} (1+ \eps ) p |V_i'| 
		\le  (1+ \eps ) p |V_i|
		\overset{\text{\eqref{equ:V*-V}}}{\leq} 3 p |V_i^*|/2.
	\end{align}

\begin{claim}
	$(G,\cV^*)$ is an $(\eps_{\ref{lem:blow-up}},1/r,p)$-regular $R$-partition, which is also $(\eps_{\ref{lem:blow-up}},1/r,p)$-super-regular on~$R'$.
\end{claim}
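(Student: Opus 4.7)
The plan is to establish the two assertions in the claim separately: the $(\eps_{\ref{lem:blow-up}}, 1/r, p)$-regularity of $(V_i^*, V_j^*)$ in $G$ for every $ij \in E(R)$, and the $(\eps_{\ref{lem:blow-up}}, 1/r, p)$-super-regularity for those pairs with $ij \in E(R')$. The $R$-partition property is immediate from the construction of $G'$, which has no edges inside any $V_i$ and whose edges lie only between pairs corresponding to edges of $R$.

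For the regularity part, I would appeal to Lemma~\ref{lem:slice}. By construction of the reduced graph, the pair $(V_i, V_j)$ is $(\eps, 1/r, p)$-regular in $G'$ for every $ij \in E(R)$. Since~\eqref{equ:V*-V} gives $|V_i^*| \ge (1-2^{-8})|V_i|$ and likewise for $V_j^*$, the slicing lemma yields that $(V_i^*, V_j^*)$ is $(\eps/(1-2^{-8}), 1/r, p)$-regular in $G'$. Because $V_i^*$ and $V_j^*$ are disjoint from $Q$, the edges of $G = G' - Q$ between them coincide with those of $G'$, so regularity transfers to $G$; the choice $\eps \le \eps_{\ref{lem:blow-up}}/4$ then yields the target $(\eps_{\ref{lem:blow-up}}, 1/r, p)$-regularity.

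For super-regularity on $R'$, fix $ij \in E(R')$ and $v \in V_i^*$; the symmetric inequality for $u \in V_j^*$ will be handled identically. Estimate~\eqref{equ:pVi*=max} yields $\deg_\G(v, V_j^*)/2 \le 3p|V_j^*|/4 < p|V_j^*|$, so the maximum in the super-regularity condition is realised by $p|V_j^*|$, and it suffices to show $\deg_G(v, V_j^*) \ge (1/r - \eps_{\ref{lem:blow-up}}) p|V_j^*|$. Since $v \notin Q$,
$$\deg_G(v, V_j^*) = \deg_{G'}(v, V_j^*) \ge \deg_{G'}(v, V_j) - \deg_\G(v, V_j \cap Q),$$
and I would bound the first term from below by $(1/r - \eps) p|V_j|$ using~\ref{itm:b2}. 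The task thus reduces to producing a good upper bound on $\deg_\G(v, V_j \cap Q)$.

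This upper bound is obtained from the decomposition $V_j \cap Q \subseteq (V_j \cap W) \cup (V_j \cap U') \cup (V_j \cap U^+)$. The $U'$ contribution is at most $2^{-9}p|V_j'|$ by~(c$_3'$), which applies since $v \in V_i^* \subseteq V(\G) \setminus (W \cup V_j')$; the $U^+$ contribution is at most $|U^+| \le 2^{-10}p|V_j'|$ by~\eqref{equ:size-W'}; and $V_j \cap W$ further splits as $\Vexc_j \sqcup (V_j' \cap (\Vdeg \cup \Vinh))$, whose second piece has cardinality $O((\log n)/p)$ and is therefore negligible compared to $p|V_j|$ for large $n$. The main obstacle will be bounding $\deg_\G(v, \Vexc_j)$ uniformly over $v \in V_i^*$, since the trivial bound $|\Vexc_j| \le \eps|V_j|$ exceeds $p|V_j|$ in the sparse regime; for this I would exploit the typical-degree condition~\ref{itm:b3}, writing $\deg_\G(v, \Vexc_j) = \deg_\G(v, V_j) - \deg_\G(v, V_j')$, and deriving a matching upper bound on $\deg_\G(v, V_j)$ from~\eqref{equ:XY-density} applied to $V_j$ and the single vertex $v$'s neighbourhood contribution, which gives $\deg_\G(v, \Vexc_j) = O(\eps) p|V_j|$. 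Collecting all the estimates and using $|V_j^*| \ge (1-2^{-8})|V_j|$ together with $\eps \le \eps_{\ref{lem:blow-up}}/4$ then yields the required super-regularity inequality.
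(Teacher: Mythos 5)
Your treatment of the regularity part and the reduction of the super-regularity condition to a lower bound on $\deg_G(v,V_j^*)$ both match the paper. The divergence, and the gap, is in how you set up the degree decomposition for the super-regularity estimate.

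You lower-bound $\deg_G(v,V_j^*)$ by starting from $\deg_{G'}(v,V_j)$ and subtracting $\deg_\G(v,V_j\cap Q)$, which forces you to control $\deg_\G(v,\Vexc_j)$ since $\Vexc_j\subseteq V_j\cap W$. You correctly identify this as the main obstacle, but the fix you propose does not close it. You write $\deg_\G(v,\Vexc_j)=\deg_\G(v,V_j)-\deg_\G(v,V_j')$ and plan to upper-bound $\deg_\G(v,V_j)$ via~\eqref{equ:XY-density}; however,~\eqref{equ:XY-density} requires both sets to have size at least $48t_1/(\eps^2 p)$ resp. $n/(2t_1)$, so it says nothing about the degree of a single vertex $v$. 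There is no other stated property of $\G$ controlling $\deg_\G(v,V_j)$ (as opposed to $\deg_\G(v,V_j')$, which~\ref{itm:b3} does control), and in a sparse random graph one cannot rule out vertices whose degree into a fixed cluster, or into an adversarially chosen set like $\Vexc_j$ of size up to $\eps|V_j|$, far exceeds $(1+O(\eps))p|V_j|$. Since $\deg_\G(v,V_j)=\deg_\G(v,V_j')+\deg_\G(v,\Vexc_j)$, any upper bound on $\deg_\G(v,V_j)$ obtained from your difference formula already presupposes the bound you are trying to prove, so the step is circular without an independent estimate, and no such estimate is available in this framework.

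The cleaner route, which the paper takes, sidesteps $\Vexc_j$ entirely: because $V_j^*\subseteq V_j'$ and $\Vexc_j$ is disjoint from $V_j'$ by definition, one instead writes $\deg_G(v,V_j^*)\ge \deg_{G'}(v,V_j')-\deg_\G(v,V_j'\cap U)-|U^+|$, lower-bounds the first term by~\ref{itm:b2} (noting $\deg_{G'}(v,V_j')\ge\deg_{G'}(v,V_j)-|\Vexc_j|\cdot 0$ is unnecessary; one just uses~\ref{itm:b2} together with $|V_j\setminus V_j'|$ being irrelevant since the target is $p|V_j'|$ anyway) and bounds the subtracted terms via~\ref{itm:deg-U} and~\eqref{equ:size-W'}. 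You should redo your decomposition relative to $V_j'$ rather than $V_j$. Your observation that $|V_j'\cap(\Vdeg\cup\Vinh)|=O(1/p)$ is negligible is correct and is a point on which you are being more careful than the paper's displayed inequality, but it does not rescue the $\Vexc_j$ step.
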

\begin{proofclaim}
	Recall that, for any $ij \in E(R)$, the pair $(V_i,V_j)$ is $(\eps,1/r,p)$-lower-regular in~$G'$ and $\eps \leq \eps_{\ref{lem:blow-up}}/2$.
	So Lemma~\ref{lem:slice} together with~\eqref{equ:V*-V} implies that $(V_i^*,V_j^*)$ is $(\eps_{\ref{lem:blow-up}},1/r,p)$-lower-regular in~$G'$.
	Hence $(G,\cV^*)$ is an $(\eps_{\ref{lem:blow-up}},1/r,p)$-regular $R$-partition.
	
Consider any $ij \in E(R')$ and $v \in V^*_i$. 
Then
\begin{align*}
\deg_{G}(v,V_j^*) & \ge  \deg_{G}(v,V_j') - \deg_{\G}(v,V_j' \cap U) - |U^+|\\
 & \overset{ \mathclap{\text{\ref{itm:b2}, \ref{itm:deg-U}, \eqref{equ:size-W'} }} }{ \ge }
( 1 / r - \eps - 2^{-9} - 2^{-10} ) p |V_j'|
 \ge ( 1/r  -  \eps_{\ref{lem:blow-up}} ) p |V_j^*|\\
& \overset{ \mathclap{ \text{\eqref{equ:pVi*=max} } }}{ \ge } ( 1/r - \eps_{\ref{lem:blow-up}}) \max\{p |V_i^*|, \deg_{\G}(v,V_j^*)/2\}\nc{}{.}
\end{align*}
Therefore, $(G,\cV^*)$ is $(\eps_{\ref{lem:blow-up}},1/r,p)$-super-regular on~$R'$.
\end{proofclaim}

\begin{claim}
	$(G,\cV^*)$ has one-sided $(\eps_{\ref{lem:blow-up}},1/r,p)$-inheritance on~$R'$.
\end{claim}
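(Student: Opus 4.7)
Since $R'$ is a perfect matching, the only triples to verify on $R'$ are of the form $(V_i, V_j, V_i)$ with $ij \in E(R')$; so it suffices to show that, for each such edge and each $v \in V_i^*$, the pair $(N_\G(v, V_j^*), V_i^*)$ is $(\eps_{\ref{lem:blow-up}}, 1/r, p)$-regular in $G$. The plan is to start from the regularity that property~\ref{itm:b4} already provides for $(N_\G(v, V_j'), V_i')$ in $G'$ and transfer it to the restricted sets via the slicing lemma.

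First I would apply~\ref{itm:b4} with the roles of $i$ and $j$ swapped (legitimate because $ji \in E(R')$ as well): since $v \in V_i^* \subseteq V(\G) \setminus W$, the pair $(N_\G(v, V_j'), V_i')$ is $(\eps_{\ref{lem:blow-up}}/2, 1/r, p)$-regular in $G'$. I then want to restrict via Lemma~\ref{lem:slice}. The side $V_i^* \subseteq V_i'$ is harmless: the bound $|V_i^*| \ge (1-2^{-8})|V_i'|$ follows from~\eqref{equ:V*-V} and $|V_i'| \le |V_i|$.

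The main computation is to show $|N_\G(v, V_j^*)| \ge (1-o(1))|N_\G(v, V_j')|$. For this I would split $V_j' \setminus V_j^* \subseteq (V_j' \cap W) \cup U^+ \cup (V_j' \cap U)$ and estimate the degree of $v$ into each piece. Property~\ref{itm:deg-U} (applicable because $v \in V_i^*$ with $i \neq j$, so $v \notin W \cup V_j'$) gives $\deg_\G(v, V_j' \cap U) \le 2^{-9}p|V_j'|$; the bound~\eqref{equ:size-W'} gives $|U^+| \le 2^{-10}p|V_j'|$; and the crude estimate $\deg_\G(v, V_j' \cap W) \le |V_j' \cap W| \le |\Vdeg| + |\Vinh| = O(p^{-1}\log n)$ (valid since $\Vexc_j$ and $\Vdeg_j$ are disjoint from $V_j'$ by construction) is $o(p|V_j'|)$. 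Combined with the lower bound $\deg_\G(v, V_j') \ge (1-\eps)p|V_j'|$ from~\ref{itm:b3}, this yields $|N_\G(v, V_j^*)|/|N_\G(v, V_j')| \ge 1 - 2^{-6}$ (say), so Lemma~\ref{lem:slice} produces the required $(\eps_{\ref{lem:blow-up}}, 1/r, p)$-regularity of $(N_\G(v, V_j^*), V_i^*)$ in $G'$.

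Finally, since $V_i^*, V_j^* \subseteq V(\G) \setminus Q$ and $G = G' - Q$, the graphs $G$ and $G'$ agree on all edges inside $V_i^* \cup V_j^*$, so regularity in $G'$ transfers to regularity in $G$ without further loss. The only place where anything beyond routine accounting is needed is the inequality $|V_j' \cap W| = o(p|V_j'|)$; this is where the hypothesis $p \ge C(\log n/n)^{1/(r+1)}$ with $r \ge 2$ is used, ensuring $p^2 n \gg \log n$. I expect no substantive obstacle beyond carefully tracking these small error terms.
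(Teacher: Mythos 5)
Your proof takes essentially the same route as the paper: both reduce one-sided inheritance (noting that only triples $(V_i,V_j,V_i)$ with $ij\in E(R')$ arise, since $R'$ is a matching) to an application of the slicing lemma (Lemma~\ref{lem:slice}) starting from property~\ref{itm:b4}, with the key step being that $\deg_\G(v,V_j^*)$ is a large fraction of $\deg_\G(v,V_j')$, controlled via~\ref{itm:b3}, \ref{itm:deg-U} and~\eqref{equ:size-W'}. Your accounting is in fact a touch more careful than the paper's own: the paper's displayed inequality omits the $\deg_\G(v,V_j'\cap W)$ contribution, which you correctly note is $O(p^{-1}\log n)=o(p|V_j'|)$ because $V_j'\cap W\subseteq\Vdeg\cup\Vinh$, though the slack in the paper's target constant $1/2$ absorbs this harmlessly.
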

\begin{proofclaim}
	Fix $ij \in E(R')$ and $z\in  V(\G) \setminus (W \cup V_i')$.
	Note that 
	\begin{align*}
		\deg_\G(z,V_i^*)& \ge  \deg_\G(z,V_i') - \deg_{\G}(z,V_i' \cap U) - |U^+|
	\\ &
	\overset{ \mathclap{ \text{\ref{itm:b3}, \ref{itm:deg-U}, \eqref{equ:size-W'} }} }{ \ge }	
	( 1  - \eps- 2^{-9} - 2^{-10} ) p |V_i'|
	\ge 3 p |V_i^*|/4
	\overset{ \mathclap{ \text{\eqref{equ:pVi*=max} } }}{\ge} \deg_\G(z,\nc{V_i^*}{V_i'})/2.
	\end{align*}
	In particular, $\deg_\G(z,V_i^*) \geq  \deg_\G(z, {V_i'})/2$ for every $z \in V_j^*$.
	By~\eqref{equ:V*-V}, $|V_i^*| \geq |V_i'|/2$.
	Recall {that by~{\ref{itm:b4}}} the pair $\big(N_\G(z,V_i'),V_j' \big)$ is $(\eps_{\ref{lem:blow-up}}/2,1/r,p)$-lower-regular in~$G'$.
	So Lemma~\ref{lem:slice} implies that $\big(N_\G(z,V_i^*),V_j^*\big)$ is $(\eps_{\ref{lem:blow-up}},1/r,p)$-lower-regular in $G'$ as well as in~$G$, which yields the desired inheritance.
\end{proofclaim}
This finishes the proof of Lemma~\ref{lem:approximate-partition}.

\section*{Acknowledgements}
We would like to thank Daniela Kühn, Deryk Osthus and the anonymous referees for their helpful comments and Louis DeBiasio for stimulating discussions.


\providecommand{\bysame}{\leavevmode\hbox to3em{\hrulefill}\thinspace}
\providecommand{\MR}{\relax\ifhmode\unskip\space\fi MR }
\providecommand{\MRhref}[2]{%
	\href{http://www.ams.org/mathscinet-getitem?mr=#1}{#2}
}
\providecommand{\href}[2]{#2}

\end{document}